\newtheorem{theorem}{Theorem}[section]
\newtheorem{corollary}{Corollary}
\newtheorem{lemma}[theorem]{Lemma}
\theoremstyle{definition}
\newtheorem{remark}{Remark}
\newcommand{\Potential}{\mathcal{W}}
\newcommand{\wbtw}{\Potential_{\textrm{\begin{tiny}{BTW}\end{tiny}}}}
\newcommand{\bx}{\boldsymbol{x}}
\newcommand{\by}{\boldsymbol{y}}
\newcommand{\be}{\boldsymbol{e}}
\newcommand{\tr}{{\textrm{tr}}}
\newcommand{\bvphi}{\boldsymbol{\varphi}}
\newcommand{\bvpsi}{\boldsymbol{\psi}}
\newcommand{\bn}{\boldsymbol{n}}
\newcommand{\bm}{\boldsymbol{m}}
\newcommand{\wconv}{\rightharpoonup}
\newcommand{\adj}{\operatorname{adj}}
\newcommand{\dist}{\operatorname{dist}}
\newcommand{\Feff}{F_{\textrm{\tiny{eff}}}}
\newcommand{\lamin}{\lambda_{\textrm{\tiny{min}}}}
\newcommand{\lamax}{\lambda_{\textrm{\tiny{max}}}}
\numberwithin{equation}{section}
\title[A Landau--de Gennes theory of  liquid crystal elastomers]
      {A Landau--de Gennes theory of  liquid crystal elastomers}
\author[M.~Carme Calderer, Carlos  A.Garavito Garz{\'o}n, and Baisheng  Yan]{}
\subjclass{Primary: 70G75, 74G65, 76A15, 74B20, 74E10, 80A22.}
 \keywords{variational methods, energy minimization, invertibility,  liquid crystals, Landau-de Gennes model,  anisotropic nonlinear elasticity.}
 \email{mcc@math.umn.edu}
 \email{garav007@umn.edu}
 \email{yan@math.msu.edu}
\begin{document}
\maketitle

\centerline{\scshape M.~Carme Calderer \and Carlos  A.Garavito Garz{\'o}n}
\medskip
{\footnotesize
 \centerline{School of Mathematics}
   \centerline{University of Minnesota}
   \centerline{206 Church Street S.E.}
\centerline{Minneapolis, MN 55455, USA.}
} 

\medskip

\centerline{\scshape Baisheng Yan}
\medskip
{\footnotesize
 \centerline{Department of Mathematics}
   \centerline{Michigan State University}
\centerline{619 Red Cedar Road}
   \centerline{East Lansing, MI 48824, USA}
}

\bigskip


\begin{abstract}
In this article, we  study minimization of the energy of a Landau-de Gennes liquid crystal elastomer.
 The total energy consists of the sum of the Lagrangian elastic stored  energy function of the elastomer and the Eulerian Landau-de Gennes energy of the liquid crystal. 

There are two related sources of anisotropy in the model, that of the rigid units represented by the traceless nematic {\it order tensor} $Q$, and the positive definite {\it step-length  tensor} $L$ characterizing the anisotropy of the network. 
This work is motivated by the study of cytoskeletal networks which can be regarded  as consisting of rigid rod units crosslinked into a polymeric-type network. 
Due to the mixed Eulerian-Lagrangian structure of the energy, it is essential that the deformation maps $\bvphi$ be invertible.  For this, we require  sufficient regularity of   the fields $(\bvphi, Q)$ of the problem, and that   the deformation map satisfies the Ciarlet-Ne{\v c}as injectivity condition. These, in turn, determine what boundary conditions  are admissible, which include  the case of  Dirichlet conditions on  both fields. Alternatively, the approach of including the Rapini-Papoular surface energy for the {\it pull-back} tensor $\tilde Q$ is also discussed.  The regularity requirements also lead us to consider  powers of the gradient of the order tensor $Q$ higher than quadratic in the energy.


We assume polyconvexity of the stored energy function with respect to the effective deformation tensor and apply methods of calculus of variations from isotropic nonlinear elasticity. 
Recovery of minimizing sequences of deformation gradients from the corresponding sequences of effective deformation tensors  requires invertibility of the anisotropic shape tensor $L$. We formulate a necessary and sufficient condition to guarantee this invertibility property  in terms of the growth to infinity of the bulk liquid crystal energy $f(Q)$, as the minimum eigenvalue of $Q$ approaches the singular limit of $-\frac{1}{3}$. It turns out that $L$ becomes singular as the minimum eigenvalue of $Q$ reaches $-\frac{1}{3}$. Lower bounds on the eigenvalues of $Q$ are needed to ensure compatibility between the theories of  Landau-de Gennes and Maier-Saupe of nematics \cite{BallMajumdar2010}.

\end{abstract}

\section{Introduction}

We investigate  existence of minimizers of the energy of a liquid crystal elastomer that in the reference configuration  occupies a domain $\Omega\subset {\mathbf R}^3$. 
We assume that the total energy consists of the Lagrangian contribution of the anisotropic elastomer and the  Landau-de Gennes liquid crystal energy in the  Eulerian frame.
The mixed Lagrangian-Eulerian setting of the problem requires injectivity of the deformation map. For this, we require the admissible deformation maps to have sufficent regularity and 
 to satisfy the Ciarlet-Necas condition as well. 
This work aims at investigation the coupling of  elastic energy and nematic order, and it applies  to, both,  thermotropic and lyotropic systems. 
 
Liquid crystal elastomers are  anisotropic nonlinear elastic materials, with the source of anisotropy stemming from  the presence of elongated  rigid monomer units,
 that are  either inserted in the back-bone as part of the polymer main-chain or  are present as side groups. 
They  are elastic solids that may also present fluid and mixed regimes \cite{conti2002soft}, \cite{desimone-doltzman2000}, \cite{desimone-doltzman2002},
 \cite{fried-sellers2006}, \cite{warner2007liquid}.
In nematic fluids, molecules tend to align themselves along preferential directions but do not present ordering of centers of mass.

The nature of the connections between polymer chains and rigid monomer units plays a main role in determining the behavior of liquid crystal elastomers \cite{warner2007liquid}. 
From a different perspective,  studies of actin and cytoskeletal networks (\cite{CaGaLuo2013}, \cite{Gardel2004}, \cite{wagner-cytoskeletal2006}) show  lyotropic rigid rod systems 
crosslinked into  networks by elastomer chains or linkers that present qualitative properties of liquid crystal elastomoers. 
A main feature of these networks is the  average number of connections between rods and the location of these connections in the rod. 
These motivates us to consider two limiting types of lyotropic systems, the first one corresponding to  a  nematic liquid with rods weakly coupled into the network. 
   The second case is that of a solid where rod rotation occurs uniquely as a result of elastic deformation, such as an elastomer made of material fibers.  
Consequeantly, we postulate Eulerian and Lagrangian Landau-de Gennes liquid crystal energies, respectively, for these  systems.
Many physical systems are found having intermediate properties between
 these two limiting cases, and  it, then, may be appropriate to postulate the energy as  a weighted sum of the Eulerian and Lagrangian energies, scaled according to a
 macroscopic parameter representing the density of crosslinks. The analogous property holds for the elastic energy of the system that we discuss in this article. 

The study of a lyotropic elastomer with Lagrangian liquid crystal energy was carried out in previous work where we also analyzed liquid crystal
 phase transitions triggered by change in rod density \cite{CaGaLuo2013}. In this work, we 
assume that the anisotropic behavior of the rigid units is represented by a liquid-like, Eulerian liquid crystal energy. 
Sufficient regularity of the deformation map is required to guarantee its invertibility,  in order to pass from the current configuration of the nematic liquid crystal 
 to the reference configuration of the elastic solid.

There are two main quantities that describe the anisotropy associated with a liquid crystal elastomer:  the traceless  order tensor $Q$ describing the nematic order of rigid rod units,  
and the positive definite step-length tensor $L$  
that encodes the shape of the network. $L$ is spherical for isotropic polymers and spheroidal for uniaxial nematic
elastomers, in which case, it has eigenvalues $l_\|$ and $l_{\perp}$ (double).
The quantity $r:= \frac{l_{\|}}{l_\perp}-1$ measures the degree of anisotropy of the network, with positive values corresponding to
prolate shape and negative ones to  oblate. In the prolate geometry, the eigenvector $\bn$
associated with  $l_{\|}$ is the director of the theory, giving the average direction of alignment of the rods and also
the direction of shape elongation of the network. It is natural to assume that $L$ and $Q$ share the same eigenvectors. In this article, we set $Q= L- \frac{1}{3}(\tr L) I$ so as 
to obtain a traceless $Q$, from a given positive definite tensor $L$ (\cite{warner2007liquid}, page 49).
The free energy may also carry information on the anisotropy $L_0$   imprinted in the network  at crosslinking the original polymer melt. In this work, we choose $L_0=I$.

The standard Landau-de Gennes free energy density consists of  the sum of scalar quadratic terms of $\nabla Q$ and the bulk  scalar function $f(Q)$, usually a polynomial  of the trace of
 powers of $Q$,   describing  the phase transition between the isotropic and the nematic  phases of the liquid crystal \cite{longa1986}, \cite {MottramNewton2004}. In this work, we take a departure from these forms
but still keeping consistentcy with the original Landau-de Gennes theory. 
We first observe that  the polynomial growth is not physically realistic, since
it is expected that the energy should grow unboundedly  near  limiting alignment configurations. In the uniaxial case, these correspond to perfect alignment, characterized by 
scalar order parameter $s=1$, and configurations where the rods are confined to the plane perpendicular to the director $\bn$  \cite{calderer-liu2000}, \cite{ericksen1991liquid}. 
In terms of the order tensor, the limiting configuration is represented by the minimum eigenvalue taking the value $-\frac{1}{3}$.  Bounds on eigenvalues of $Q$ are not part of the original 
theory, but are needed for it to be compatible with the Maier-Saupe theory from statistical  physics \cite{BallMajumdar2010}. 
 This turns out to be as well an essential element of our analysis. 
Also,  we take the gradient of  $Q$ with respect to  space variables  in the deformed configuration, in which case the coupling with the deformation gradient $F$ emerges naturally. It turns out that  powers of gradient of $Q$, higher than quadratic,  in the 
energy,  are required for compactness.

The elastic energy density proposed by
Blandon, Terentjev and Warner  is given by the trace form \cite{warner2007liquid}:
\begin{equation}
\wbtw= \mu \tr (L_0F^TL^{-1}F- \frac{1}{3}I). \label{btwL0}
\end{equation}
This is the analog of the Neo-Hookean energy of isotropic
elasticity  and is also derived from Gaussian statistical mechanics. Let us call $G=L^{-\frac{1}{2}}FL_0^{\frac{1}{2}}$ the effective deformation   tensor of the network; of course,  
in general,  $G$ is not itself  a gradient. Then $\wbtw =\mu (|G|^2-1).$ Motivated by the theory of existence of minimizers of
isotropic nonlinear elasticity (\cite{ball77}), in his PhD, thesis \cite{chongluo2010},   Luo  generalizes  $\wbtw$ to  polyconvex  stored energy density functions
$\hat w(G(\bx))$; that is, $\hat w(G)= \Psi(G, \adj G, \det G)$ is a convex function of $(G, \adj G, \det G)$.  This approach was used later in  \cite{CaGaLuo2013}) to model 
phase transitions in rod networks.    In order to recover the limiting deformation gradient $F^*$ from the minimizing sequences $\{G_k\}_{k\geq 1}$, it is necessary that the  
minimizing sequences $\{L_k\}$ yield a nonsingular limit.  This is achieved     
 by requiring the blowup of  $f(Q)$ as the minimum eigenvalue of $Q$ tends to $-\frac{1}{3}$,  that is, $f(Q)\to \infty $ as $\det(Q+\frac{1}{3}I)=\det L\to 0$. We point out that this lends another significance 
to the minimum eigenvalue limit. 

  In a related work \cite{calderer-luo2012}, Calderer and Luo carried out a mixed finite element analysis of the elastomer 
trace energy coupled with that of  the Ericksen model of  uniaxial nematic liquid crystals. This was later used in  numerical simulations of domain formation in two-dimensional extensional
 deformations. From a different persepective, a study of uniaxial elastomers with variable length director was carried out in \cite{CaldererLiuYan2006}, for a restricted set of deformation maps.

Let us now comment on the deeper mechanical significance of $G$ (also denoted $\Feff$) in connection with the special class of spontaneous deformation  as  brought up by Agostiniani,
 DeSimone and Teresi  \cite{desimone-agostiniani2012}, \cite{desimone2009elastic}.  Spontaneous deformations are those that do not cost any elastic energy.  
In uniaxial nematics, they correspond to volume preserving uniaxial extensions along the eigenvector $\bn$, the latter being defined in the current configuration of the elastomer. 
 (In the context of  biaxial nematics, volume preserving biaxial extensions are also spontaneous deformations represented by $F_s=\sqrt{L_s}$, where, in its (unit) eigenvector
 representation, $L_s:= a\bm\otimes\bm+ \mathbf r\otimes\mathbf r)+ c\bn\otimes\bn$, $ \, a, b, c>0,$  \, $abc=1$).  Spontaneous strains have crystallographic significance in that 
they represent variants in domain patterns present in low energy microstructure. The chevron domains observed by Sanchez and Finkelman in liquid crystal elastomers realize
 the microstructure in these materials. \cite{KundlerFinkelmann1995}, \cite{Finkelmann-Sanchez2008} and \cite{zubarev1999monodomain}.

 We note that  $\Feff$ does not represent the deformation of the elastomer with respect to the identity but with  respect to spontaneous deformations $\sqrt{L_s}$. 
Consequently,  the Trieste group also pointed out that $\Feff$ is the only deformation tensor that contributes to the elastic energy of the elastomer,   this energy being   
isotropic, since it has the same form regardless the current value of $\bn$.    These  authors proposed to include as well an anisotropic component in the energy depending on
 $F^TF$ (\ref{des-12}). We observe the regularizing role of the latter part of the energy. Without loss of generality,  in this work, we neglect the latter term and address 
the more challenging problem  of the energy depending only on $G$ ($\Feff$).

In addition to the trace models of liquid crystal elastomer energy studied by Terentjev and Warner (\cite{warner2007liquid} and references therein), 
 generalizations of these earlier forms have been proposed and studied by several authors (\cite{anderson-carlson-fried1999} and \cite{fried-sellers2003};   \cite{desimone-agostiniani2012}, \cite{cesana2008strain}, \cite{desimone-cesana2011} and \cite{desimone2009elastic}). In these references, the authors propose energies based on powers of the earlier trace form, including Ogden type energies,  and study their extensions to account for semisoft elasticity.  
Also, the analysis of equilibrium states presented in \cite{desimone-agostiniani2012} applies to  elastomer energy density functions that  are not quasiconvex, these being appropriate 
  to model crystal-like phase transitions.
  


The types of  boundary conditions that we consider include prescribing the deformation map on the boundary or part of it. In such cases, we also prescribe components of  the  order tensor $Q$  there.
In an alternate approach, instead of prescribing boundary conditions on $Q$, we include a surface free energy of the Rapini-Papoular type.  The latter also extends to those parts of the  boundary with no prescribed Dirichlet conditions on the deformation map. In such a case,  the energy integral is formulated in terms of the {\it{pull-back}} order tensor $\tilde Q(\bx)$, $\bx\in\Omega$.

This article is organized as follows.   Section 2 is devoted to  modeling, with special emphasis on analyzing the type of coupling between the Landau-de Gennes model of nematic
 liquid crystal and the nonlinear elasticity of the anisotropic network.  In Section 3, we study the admissible set of fields corresponding to finite  energy, focusing on classes of 
 deformation maps and boundary conditions for which the invertibility  property of  the maps holds.  Section 4 addresses minimization of the energy.  
  The conclusions are described in Section 5.

\section{The  Landau-de Gennes  liquid crystal elastomer} 
Equilibrium configurations of  nematic liquid crystal elastomers are characterized by the
 deformation gradient $F$ together with the symmetric tensors $L$ and $Q$,
  describing the shape of the material and the nematic order, respectively.
  Within the point of view of the mean-field theory, the state of alignment of a nematic liquid crystal is given by a probability distribution function $\rho$ in the unit sphere. The order tensor is defined as the second order moment of $\rho$:
  \begin{equation}
  Q= \int_{{\mathbb S}^2} (\bm\otimes\bm -\frac{1}{3} I)\rho(\bm)\,d\bm. \label{order-tensor}
  \end{equation}
  From this definition, it follows that $Q$ is a symmetric, traceless, $3\times 3$ matrix with bounded eigenvalues   (\cite{Forest2004} and \cite{BallMajumdar2010})
   \begin{equation}
-\frac{1}{3}\leq \lambda_i(Q)\leq \frac{2}{3}, \,\,\,  i=1, 2, 3, \quad \sum_{i=1}^3\lambda_i(Q)=0. \label{eigenvaluesQ}
   \end{equation}
 For  biaxial nematic, $Q$ admits the representation \begin{equation}Q=r (\be_1\otimes\be_1-\frac{1}{3}I)+ s(\be_2\otimes\be_2-\frac{1}{3}I), \label{Q-biaxial}\end{equation}
where $r$ and $s$ correspond to the biaxial order parameters
$$s= \lambda_1-\lambda_3= 2\lambda_1+\lambda_2, \quad r= \lambda_2-\lambda_3=\lambda_1+2\lambda_2, $$
and $\be_1$ and $\be_2$ are unit eigenvectors. 
If $r=0$, then (\ref{Q-biaxial})  yields   the uniaxial order tensor
$Q= s(\be_2\otimes\be_2-\frac{1}{3}I),$
where $s\in(-\frac{1}{2}, 1)$ corresponds to the uniaxial nematic order parameter and $\bn=\mathbf e_2$ is the unit  director field of the theory.

In a rigorous study of the Landau-De Gennes model \cite{Apala-Majumdar2010},  Majumdar observes that the definition of $Q$ given by (\ref{order-tensor}) is that of the Maier-Saupe mean-field theory \cite{maier-saupe}.  In the original theory by Landau and de Gennes \cite{DeGennes1995physics}, \cite{longa1986}, $Q$ has a phenomenological role as a  dielectric or magnetic susceptibility tensor, and its  eigenvalues   do not satisfy any inequality constraints. It is the latter that  bring compatibility to these theories. 
 
     Following   the property of  freely  joined rods, we assume that  $L$ and $Q$
have common eigenvectors and propose the constitutive relation
\begin{equation}
L=a_0(Q+\frac{1}{3} I), \label{LQ0}
\end{equation}
where $a_0=\tr L> 0$ is constant.  The linear constitutive equation (\ref{LQ0}) is analogous to those proposed by  Terentjev and Warner \cite{warner2007liquid} and Fried and Sellers \cite{fried-sellers2003} stating that, given a symmetric and traceless tensor $Q$ and a  constant $\beta>0$, there is a one  $\alpha$-parameter family of step-length tensors $L$ with $\tr L=\beta$,  and such that $L= \beta(\alpha Q+ \frac{1}{3}I).$
The form (\ref{LQ0}) corresponds to taking $\alpha=1$ and $\beta=a_0.$

\begin{figure}
\centering
\scalebox{.6}{
\label{isotropic-nematic}
\includegraphics{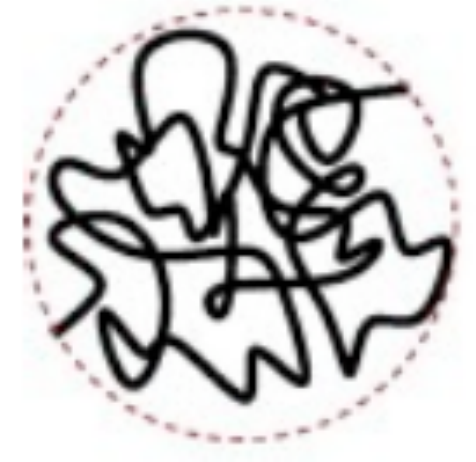}\quad \quad \quad \quad \quad \quad \quad \quad \quad \quad \quad 
\includegraphics{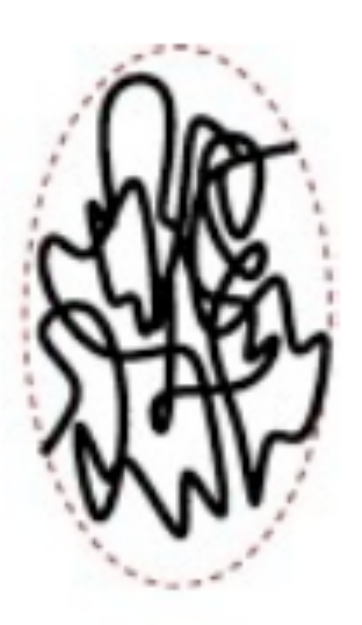}}
\caption{Representation of the step-lenght tensor $L$:  isotropic polymer (left) and nematic polymer (right).}
\end{figure}
Letting $l_1,l_2,l_3$ denote the   eigenvalues of $L$,  it  reduces to the uniaxial nematic with director $\bn$ and order parameter a multiple of $s$
in the case that
\[
l_2=l_1:=l_\perp, \,\, l_3:=l_\|.
\]
The mechanical response of the elastomer along $\bn$ is distinguished from that along any of the transverse directions.  

From the constitutive assumption (\ref{LQ0}) it follows that
\begin{equation}
\det L=0 \Longleftrightarrow \det(Q+\frac{1}{3}I)=0  \Longleftrightarrow \lamin(Q)=-\frac{1}{3}, \label{Linvertibility}
\end{equation}
where $\lamin(Q)$ stands for the minimal eigenvalue of a symmetric tensor $Q$. This shows another consequence of requiring $\lamin(Q)>-\frac{1}{3}$: to gurantee the invertibility of $L$, and so, to be able to 
recover the gradient of deformation from the effective deformation tensor $G$. 

We assume that, in the reference configuration, a liquid crystal elastomer occupies a bounded  domain
$\Omega\subset {\mathbf R}^3$ with smooth boundary $\partial\Omega$.
We denote the deformation map of the elastomer and its gradient as
\begin{eqnarray}
&\bvphi: \Omega\to \bvphi(\Omega), \quad \by=\bvphi(\bx),&
 \label{deformation} \\ 
& F(\bx)=\nabla\bvphi(\bx),
  \quad \det F(\bx)\ge \delta_0,& \label {incompressible}
\end{eqnarray}
where $\delta_0>0$ is a given constant. This last constraint on  the determinant expresses the maximum degree of compressibility allowed to the material.

The   trace-form    free energy  density $\wbtw= \mu \tr (L_0F^TL^{-1}F- \frac{1}{3}I)$  expresses the coupling between the step length tensor $L$ and the deformation gradient $F$ and also encodes the anisotropy $L_0$ of the reference configuration.
Usually, $L_0$ is a constant positive definite symmetric tensor  and, by rescaling,  in what follows, we assume $L_0=I.$ Define $G=\Feff=L^{-\frac12} F$.

In general, scalar functions of the invariants of the tensors $FF^T$ and $\Feff^T\Feff$ are admissible.
Let us examine how vectors transform in each of these cases as illustrated in figure (\ref{effective-deformation-tensor}).
\begin{figure}
\label{effective-deformation-tensor}
\centerline{
\scalebox{0.5}
{\includegraphics{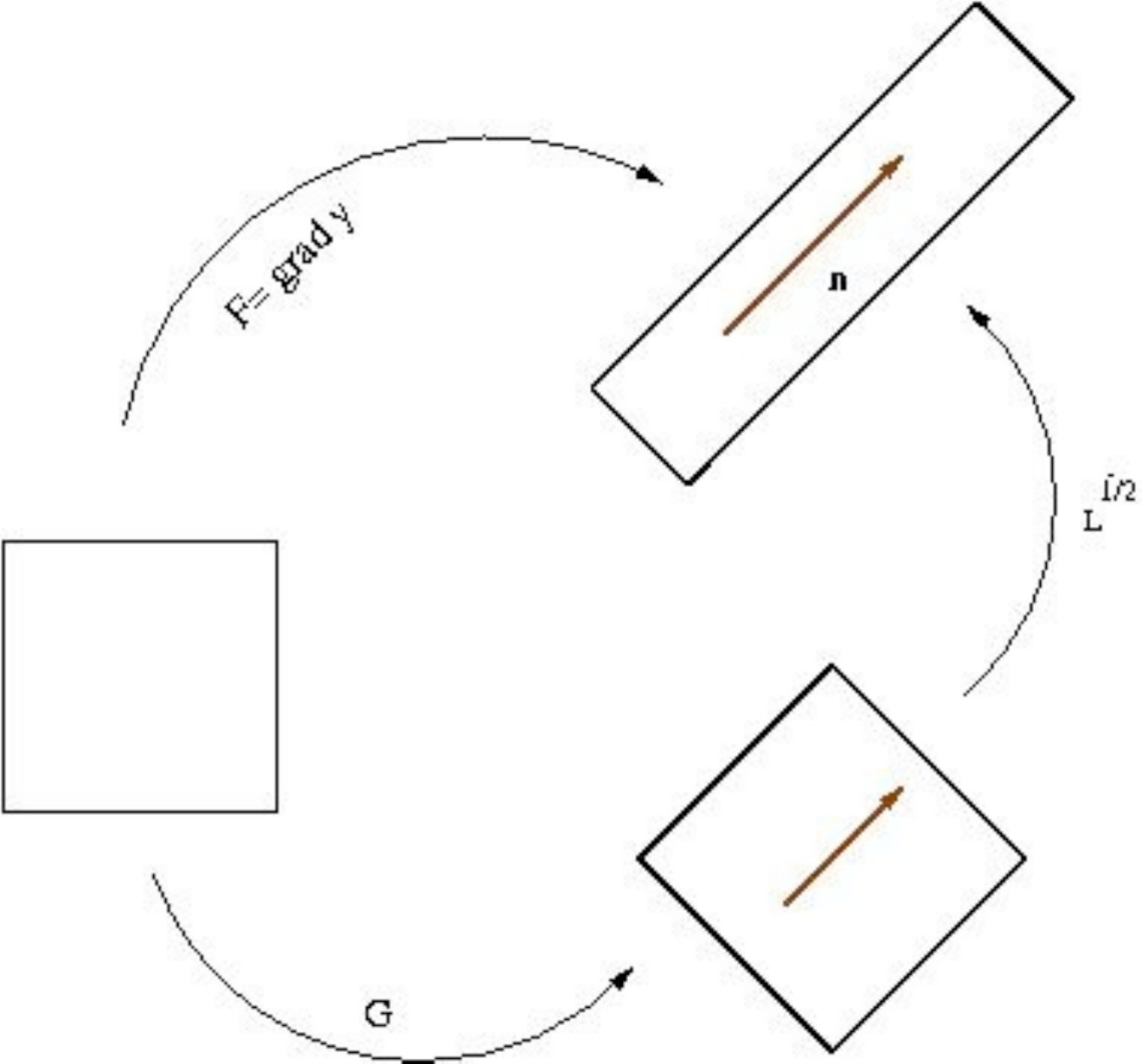}}}
\caption{Relation between the deformation gradient $F$ and the effective deformation tensor $\Feff$. }
\end{figure}
In  \cite{desimone2009elastic}, the authors propose an elastomer energy of the form
\begin{equation}\label{des-12}
W(F, L, L_0)= \alpha W_{\alpha}(\Feff\Feff^T) + \beta W_{\beta}(F^TF), 
\end{equation}
with $\Feff$ representing the deformation tensor with respect to spontaneous deformations. The $\alpha$-component of this energy was also analyzed in previous work   \cite{CaGaLuo2013}, \cite{chongluo2010}, and in 
 \cite{CaldererLiuYan2006} and 
\cite{calderer-luo2012} for  the Neo-Hookean trace form of the energy. 
As indicated in the introduction, these terms correspond to  two limiting liquid crystal elastomer behaviors, perhaps ideal, the first corresponding to
the case that the material is made of pure fibers and the second  to a standard liquid crystal elastomer but  showing interaction between alignment and deformation.

We focus on the cases of nontrivial coupling between  $Q$ and $F$ and propose a Landau-de Gennes elastomer energy of the following  form:
\begin{equation}
\mathcal E(\bvphi,Q)=  \int_{\Omega} \hat W(G)\,d\bx+ \int_{\bvphi(\Omega)} \big(\mathcal L({\nabla}_{\by} Q, Q) + f(Q)\big)\,d\by, \label{total-energy-Lag_Eul}
\end{equation}
where, for certain given $\bvphi$ and $Q$,
\begin{eqnarray}
&L(\by)=a_0 (Q(\by)+\frac13 I), \,\, \mbox{where $a_0>0$ is a constant,} & \label{eq210}\\
&G= G(\bx) :=\tilde L(\bx)^{-\frac{1}{2}}F, \,\, F=\nabla \bvphi(\bx),\, \,  \tilde L(\bx)=L(\bvphi(\bx)), &\label{eq29}\\
&(\nabla_{\by}Q)_{ijk}=\frac{\partial Q_{ij}}{\partial y_k}, \,\, 1\leq i, j, k\leq 3. &\label{eq211}
\end{eqnarray}

In what follows, we denote ${\mathbb M}^3$  the space of three-dimensional tensors and $\mathbb{M}^3_+=\{M\in \mathbb{M}^3:\det M>0\}.$ According to (\ref{eigenvaluesQ}),  we define
\begin{equation}\label{set-Q}
\mathcal Q=\{Q\in \mathbb{M}^3: Q=Q^T,\,\, \tr Q=0,\,\, -\frac13<\lamin(Q)\le \lamax(Q)\le \frac 23\}.
\end{equation}
 Then, for $Q\in \mathcal Q$, it follows that $|Q|\le \frac{2}{\sqrt{3}}$ and the tensor $L$ defined by (\ref{LQ0}) is invertible.

As for the density functions $\hat W$, $\mathcal L$ and $f$ in (\ref{total-energy-Lag_Eul}),  we make the following assumptions motivated by the analogous ones in isotropic nonlinear elasticity
\cite{ball77}.
\smallskip

\noindent
{\it Polyconvexity and coerciveness of $\hat W$}: There exists a convex function $\Psi:
\mathbb{M}^3_{+}\times \mathbb{M}^3\times \mathbb{R}^+\to
\mathbb{R}$
such that  $\hat W$ in
(\ref{total-energy-Lag_Eul}) satisfies
 \begin{equation}
\hat W(G)= \Psi(G, \adj G, \det G). \label{polyconvexity}
 \end{equation}
Also, there exist constants $\alpha>0,  \, p>3 $ such that
\begin{eqnarray}\label{coer-W}
  \hat W(G)\geq \alpha |G|^p ,   \quad\forall\, G\in
\mathbb{M}^3_{+}.
\label{coerciveness}\end{eqnarray}

\noindent
{\it Convexity and growth of $\mathcal L$}:  The Landau-de Gennes energy function $\mathcal L(\nabla_{\by} Q,Q)$ is convex in $\nabla_{\by} Q$. Moreover, there exists a constant  $\kappa>0$  
such that
\begin{equation}\mathcal L(\nabla_{\by} Q,Q)\geq \kappa|\nabla_{\by} Q|^{r}, \label{supergrowth} \end{equation}
where $r$ is a constant satisfying
\begin{equation}\label{cond-r}
r>\max\{3,\,\frac{p}{p-3}\}. 
\end{equation}

\noindent
{\it Blow-up of  $f$}:
The bulk free energy density
  $f: \mathcal Q\to {\mathbb R}^+ $ is continuous and satisfies
 \begin{equation}\lim_{\lamin(Q)\to -\frac{1}{3}}f(Q)=+\infty. \label{Phi3}
\end{equation}

\noindent
{\bf Notation.} For $r$ as in (\ref{cond-r}), we define 
\begin{equation}
q=\frac{pr}{p+r}. \label{q}
\end{equation}
We point out that  $q>\max\{\frac{p}{p-2},\frac{3p}{p+3}\}>1$.

\noindent
{\bf Remarks. \,}
\begin{enumerate}
\item If $L$ is given by  (\ref{LQ0}), then $0<\det L\le (a_0)^3$ when $Q\in \mathcal Q.$ So, in the case that $\det F\ge \delta_0$,  we have that
$\det G\geq \delta_0 (a_0)^{-\frac{3}{2}}$. Therefore, no condition on the growth near zero-determinant has to be imposed on $\hat W(G)$.
\item We observe that  $\det L$ can become arbitrarily small.
This corresponds to the polymer adopting a needle or plate shape.
\item The growth condition (\ref{Phi3}) has been applied in \cite{BallMajumdar2010} in the context of  studying of minimization of the Landau-de Gennes energy. Its restriction to the uniaxial case was first proposed by Ericksen \cite{ericksen1991liquid} and used in analysis of defects in liquid crystal flow \cite{CM96, CM97, CM98, calderer-liu2000}.
\end{enumerate}
In order to achieve a better understanding of the gradient part of the of the Landau-de Gennes energy (\ref{supergrowth}), let us give a brief review of  the standard  liquid crystal theory \cite{MottramNewton2004}. The total energy is of the form
\begin{eqnarray*}
&&\mathcal E_{\textrm{\tiny{LdeG}}}= \int_{\Omega}(\Psi(\nabla Q, Q) + f_{\textrm{B}}(Q; T))\, d\bx, \\
&& \Psi(\nabla Q, Q) = \sum_{i=1}^4 L_i I_i, \label{multi-constant} \\
&& f_{\textrm{B}}(Q; T)= \frac{a(T)}{2} \tr Q^2-\frac{b}{3}\tr Q^3+\frac{c}{4}\tr Q^4, 
\end{eqnarray*}
where $a(T)=\alpha(T-T^*), $  $T>0$ denotes the absolute temperature, $\alpha, T^*, b, c, L_i=L_i(T)$ are constants, and 
\begin{eqnarray*}
&&I_1=Q_{ij,j}Q_{ik,k}, \,\, I_2=Q_{ik,j}Q_{ij.k}, \\
&& I_3=Q_{ij,k}Q_{ij,k}, \, \, I_4=Q_{lk}Q_{ij.l}Q_{ij.k}. 
\end{eqnarray*}
In the special case of a single constant,  the energy (\ref{multi-constant}) reduces to $L|\nabla Q|^2$. For this energy,  existence of  global minimizer  was discussed in \cite{majumdar2010landau}, with further studies of  regularity, characterization of uniaxial and biaxial states,  and structure of defect sets presented in \cite{Majumdar2012}. 

\section{Admissible classes of fields} In what follows,we let $p>3$ and  $r>\max\{3,\frac{p}{p-3}\}$ be as in (\ref{cond-r}).  We consider the admissible classes of $(\bvphi,Q)$ for the energy $\mathcal E(\bvphi,Q)$,  from subclasses of functions $\bvphi\in W^{1,p}(\Omega,\mathbb{R}^3)$ and $Q\in W^{1,r}(\bvphi(\Omega),\mathbb{M}^3).$ Throughout the paper, given a measurable set $B\subset \mathbb{R}^3$ and a measurable function $\omega$ on $B$, we define $\omega\in W^{1,s}(B)$, provided there exist an open set $O$ containing $B$ and a function $\tilde \omega\in W^{1,s}(O)$ such that $\tilde \omega=\omega$ on $B$.

 We assume $\bvphi\in W^{1,p}(\Omega,\mathbb{R}^3)$ and  that  the condition (\ref{incompressible}) above is satisfied. To avoid material inter-penetration,  we also require that $\bvphi$ be injective. The latter issue is  addressed in  forthcoming lemmas. 

Since $p>3$, by Sobolev embedding, every map $\bvphi\in W^{1,p}(\Omega,\mathbb{R}^3)$ is H\"older continuous on $\bar\Omega$. Moreover,  in \cite{Marcus-Mizel} Marcus and Mizel  proved that, for every measurable set $A\subseteq \Omega$, the set $\bvphi(A)$ is also  measurable and  
\begin{equation}\label{MM-1}
|\bvphi(A)|\le C\,|A|^{1-\frac{3}{p}} \|\nabla\bvphi\|_{L^p(A)}^3.
\end{equation}
In particular, $|\bvphi(A)|=0$ for all $A\subset \bar\Omega$ with $|A|=0$
(that is, $\bvphi$ satisfies the so-called {\bf Lusin (N) property}). Furthermore, the following change of variable formula (or area formula) holds (see also \cite{Hajlasz}): if $g\ge 0$ is measurable, then
\begin{equation}\label{covf}
\int_{\bvphi(\Omega)} N(\bvphi,\by) g(\by)\,d\by=\int_\Omega g(\bvphi(\bx))  \det\nabla \bvphi(\bx)\,d\bx,
\end{equation}
 where \begin{equation}N(\bvphi,\by)=\mathcal H^0(\bvphi^{-1}(\by))=\#\{\bx\in\Omega\colon \bvphi(\bx)=\by\}. \label{N} \end{equation}

Consequently, the injectivity of  $\bvphi$ corresponds to the condition $N(\bvphi,\by)=1$,  for all $\by\in\bvphi(\Omega)$. In the context of nonlinear elasticity,  this condition has been addressed by Ball \cite{ball81},  for pure displacement one-to-one boundary conditions, and also by Ciarlet and Nec\v as \cite{ciarlet-necas1987},   in terms of  the inequality:
\begin{equation}\label{1-1}
\int_{\Omega}\det\nabla\bvphi(\bx)\,d\bx\leq |\bvphi(\Omega)|.
\end{equation}

The following result is useful for our purpose.

\begin{lemma}\label{lem-1} Let $\bvphi\in W^{1,p}(\Omega,\mathbb{R}^3)$ satisfy conditions (\ref{incompressible}) and (\ref{1-1}). Then,  $N(\bvphi,\by)=1$ for a.e.\,$\by\in\bvphi(\Omega);$ hence, for all measurable  $g\ge 0$, it follows that
\begin{equation}\label{covf-0}
\int_{\bvphi(\Omega)} g(\by)\,d\by=\int_\Omega g(\bvphi(\bx))  \det\nabla \bvphi(\bx)\,d\bx.
\end{equation}
 Moreover,  $|\bvphi^{-1}(B)|\le |B|/\delta_0$ for all measurable sets $B\subseteq \bvphi(\Omega)$.
In particular, $|\bvphi^{-1}(B)|=0$ for all $B\subset \bvphi(\Omega)$ with $|B|=0.$
\end{lemma}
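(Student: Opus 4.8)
The plan is to extract the multiplicity function $N(\bvphi,\cdot)$ from the area formula (\ref{covf}) and to use the Ciarlet--Ne\v cas inequality (\ref{1-1}) to pin it down to the value $1$ almost everywhere. First I would apply (\ref{covf}) with the constant function $g\equiv 1$ on $\bvphi(\Omega)$, which gives
\begin{equation*}
\int_{\bvphi(\Omega)} N(\bvphi,\by)\,d\by=\int_\Omega \det\nabla\bvphi(\bx)\,d\bx.
\end{equation*}
By (\ref{1-1}) the right-hand side is bounded above by $|\bvphi(\Omega)|=\int_{\bvphi(\Omega)}1\,d\by$, so that
\begin{equation*}
\int_{\bvphi(\Omega)}\big(N(\bvphi,\by)-1\big)\,d\by\le 0.
\end{equation*}

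On the other hand, by the definition (\ref{N}) of $N$, every point of the image has at least one preimage, so $N(\bvphi,\by)\ge 1$ for every $\by\in\bvphi(\Omega)$; hence the integrand above is nonnegative. A nonnegative function with nonpositive integral vanishes a.e., which forces $N(\bvphi,\by)=1$ for a.e.\ $\by\in\bvphi(\Omega)$. Once this is known, for any measurable $g\ge 0$ one has $N(\bvphi,\by)g(\by)=g(\by)$ for a.e.\ $\by$, and substituting into (\ref{covf}) yields the change-of-variable identity (\ref{covf-0}).

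For the measure estimate I would specialize (\ref{covf-0}) to $g=\chi_B$, the indicator of a measurable set $B\subseteq\bvphi(\Omega)$. Since $\chi_B(\bvphi(\bx))=\chi_{\bvphi^{-1}(B)}(\bx)$, the identity (\ref{covf-0}) becomes
\begin{equation*}
|B|=\int_{\bvphi^{-1}(B)}\det\nabla\bvphi(\bx)\,d\bx\ge \delta_0\,|\bvphi^{-1}(B)|,
\end{equation*}
where the last inequality uses the pointwise bound $\det\nabla\bvphi\ge\delta_0$ from (\ref{incompressible}). This gives $|\bvphi^{-1}(B)|\le|B|/\delta_0$, and in particular $|\bvphi^{-1}(B)|=0$ whenever $|B|=0$.

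The argument is essentially mechanical once the area formula (\ref{covf}) is in hand, so I do not anticipate a deep obstacle; the one point requiring genuine care is the measurability of $\bvphi^{-1}(B)$ for a general Lebesgue-measurable $B\subseteq\bvphi(\Omega)$. For Borel $B$ this is immediate from the continuity of $\bvphi$, and the estimate above then shows $|\bvphi^{-1}(Z)|=0$ for every Borel null set $Z$. For general measurable $B$ I would sandwich $B_1\subseteq B\subseteq B_2$ between Borel sets with $|B_2\setminus B_1|=0$; then $\bvphi^{-1}(B)$ differs from the measurable set $\bvphi^{-1}(B_1)$ by a subset of the null set $\bvphi^{-1}(B_2\setminus B_1)$, hence is itself measurable, with $|\bvphi^{-1}(B)|=|\bvphi^{-1}(B_1)|\le|B|/\delta_0$. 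Finally, the appeal to (\ref{covf}) tacitly relies on the measurability of $N(\bvphi,\cdot)$, which is part of the area-formula machinery for Sobolev maps enjoying the Lusin (N) property established above.
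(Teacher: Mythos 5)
Your proposal is correct and follows essentially the same route as the paper: plug $g\equiv 1$ into the area formula, combine with the Ciarlet--Ne\v cas inequality and the pointwise bound $N\ge 1$ to force $N=1$ a.e., then take $g=\chi_B$ and use $\det\nabla\bvphi\ge\delta_0$ for the measure estimate. Your added remarks on the measurability of $\bvphi^{-1}(B)$ are a careful touch the paper leaves implicit, but they do not change the argument.
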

\begin{proof} With $g(\by)\equiv 1$ in (\ref{covf}), by (\ref{1-1}), we obtain that
\[
|\bvphi(\Omega)|\le \int_{\bvphi(\Omega)} N(\bvphi,\by) \,d\by=\int_\Omega \det\nabla \bvphi(\bx)\,d\bx\leq |\bvphi(\Omega)|.
\]
Therefore, $N(\bvphi,\by)=1$ for a.e.\,$\by\in\bvphi(\Omega)$. Hence (\ref{covf-0}) follows from (\ref{covf}). In (\ref{covf-0}), let $g(\by)=\chi_B(\by)$, where $B\subseteq \bvphi(\Omega)$ is any measurable set, and 
$\chi_B$ denotes the characteristic function. So, 
we have
\[
|B|=\int_B \,d\by=\int_\Omega \chi_B(\bvphi(\bx))\det\nabla\bvphi\,d\bx =\int_{\bvphi^{-1}(B)} \det\nabla\bvphi\,d\bx \ge \delta_0 |\bvphi^{-1}(B)|,
\]
which completes the proof. \end{proof}

\begin{remark}
From the arguments of the  proof, we see  that  for maps $\bvphi\in W^{1,p}(\Omega,\mathbb{R}^3)$ with $\det \nabla\bvphi(\bx)>0$ a.e.\,$\Omega$,  if inequality  (\ref{1-1}) holds then it must be an equality. 
\end{remark}

We also need the following result on invertibility  of the deformation map.  For related results under weaker regularity conditions, we refer the reader to \cite{Fonseca-Gangbo}.
\begin{lemma}\label{lem-1-1} Let $\bvphi\in W^{1,p}(\Omega,\mathbb{R}^3)$ satisfy conditions (\ref{incompressible}) and (\ref{1-1}). Then, there exist sets $N\subset \Omega$ and $Z\subset \bvphi(\Omega)$ with $|N|=|Z|=0$ such that $\bvphi\colon\Omega\setminus N\to \bvphi(\Omega)\setminus Z$ is bijective and the  inverse map $\bvphi^{-1}=\bvpsi$ belongs to $ W^{1,p/2}(\bvphi(\Omega)\setminus Z)$, with
\begin{equation}\label{weak-grad}
\nabla_{\by} \bvpsi(\by)=((\nabla_{\bx}\bvphi)^{-1} \circ \bvpsi)(\by)=\frac{\adj ((\nabla\bvphi)\circ \bvpsi)(\by)}{\det((\nabla\bvphi)\circ \bvpsi)(\by)}\quad a.e.\,\,\by\in \bvphi(\Omega)\setminus Z. 
\end{equation}
\end{lemma}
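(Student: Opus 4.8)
The plan is to prove the statement in three stages: first I would upgrade the a.e.\ injectivity from Lemma \ref{lem-1} into a genuine bijection between sets obtained by deleting Lebesgue-null sets; then I would exhibit the natural $L^{p/2}$ candidate for $\nabla_\by\bvpsi$; and finally I would verify, by combining the change of variables with the Piola identity, that this candidate really is the weak gradient.

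\textbf{Stage 1 (bijectivity off null sets).} Since $p>3$, $\bvphi$ is differentiable a.e.\ in $\Omega$, and by (\ref{incompressible}) one has $\det\nabla\bvphi\ge\delta_0$ a.e. By Lemma \ref{lem-1}, $N(\bvphi,\by)=1$ for a.e.\ $\by\in\bvphi(\Omega)$, so $Z_0=\{\by\in\bvphi(\Omega)\colon N(\bvphi,\by)\ne 1\}$ is null. I would introduce the good set
\[
\Omega_0=\{\bx\in\Omega\colon \bvphi\text{ differentiable at }\bx,\ \det\nabla\bvphi(\bx)>0,\ N(\bvphi,\bvphi(\bx))=1\}.
\]
Its complement is contained in the union of the (null) non-differentiability set and $\bvphi^{-1}(Z_0)$, and Lemma \ref{lem-1} gives $|\bvphi^{-1}(Z_0)|\le |Z_0|/\delta_0=0$; hence $N:=\Omega\setminus\Omega_0$ is null. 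On $\Omega_0$ the requirement $N(\bvphi,\bvphi(\bx))=1$ forces $\bvphi|_{\Omega_0}$ to be injective, so it is a bijection onto $\bvphi(\Omega_0)$. Setting $Z:=\bvphi(\Omega)\setminus\bvphi(\Omega_0)\subseteq\bvphi(N)$ and using the Lusin (N) property (a consequence of (\ref{MM-1})) yields $|Z|\le|\bvphi(N)|=0$. Thus $\bvphi\colon\Omega\setminus N\to\bvphi(\Omega)\setminus Z$ is a bijection and $\bvpsi:=\bvphi^{-1}$ is well defined a.e.

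\textbf{Stage 2 ($L^{p/2}$ bound).} Let $H(\by)$ denote the right-hand side of (\ref{weak-grad}), so that $H(\bvphi(\bx))=(\nabla\bvphi(\bx))^{-1}=\adj\nabla\bvphi(\bx)/\det\nabla\bvphi(\bx)$. Applying (\ref{covf-0}) with $g=|H|^{p/2}$ gives
\[
\int_{\bvphi(\Omega)}|H(\by)|^{p/2}\,d\by=\int_\Omega\frac{|\adj\nabla\bvphi(\bx)|^{p/2}}{(\det\nabla\bvphi(\bx))^{p/2-1}}\,d\bx\le \delta_0^{1-p/2}\int_\Omega|\adj\nabla\bvphi|^{p/2}\,d\bx,
\]
where I used $\det\nabla\bvphi\ge\delta_0$ and $p/2>1$. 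Since $|\adj M|\le C|M|^2$ on $\mathbb{M}^3$, the final integral is at most $C\|\nabla\bvphi\|_{L^p(\Omega)}^p<\infty$, so $H\in L^{p/2}(\bvphi(\Omega))$; and because $\Omega$ is bounded, $\bvpsi\in L^\infty\subset L^{p/2}(\bvphi(\Omega))$ as well.

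\textbf{Stage 3 (identification of the weak gradient).} It remains to show $\partial_{y_j}\psi_k=H_{kj}$ weakly. For $\phi\in C_c^1$ supported in the interior of $\bvphi(\Omega)$ and away from the compact null set $\bvphi(\partial\Omega)$, I set $u=\phi\circ\bvphi\in W^{1,p}(\Omega)$ with compact support in $\Omega$. Changing variables via (\ref{covf-0}), using $\psi_k(\bvphi(\bx))=x_k$ together with the chain rule $\partial_{y_j}\phi\circ\bvphi=\sum_i(\nabla\bvphi)^{-1}_{ij}\,\partial_{x_i}u$ and $(\adj\nabla\bvphi)_{ij}=\det\nabla\bvphi\,(\nabla\bvphi)^{-1}_{ij}$, I rewrite $\int_{\bvphi(\Omega)}\psi_k\,\partial_{y_j}\phi\,d\by$ as $\int_\Omega x_k\sum_i(\adj\nabla\bvphi)_{ij}\,\partial_{x_i}u\,d\bx$. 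The vector field $\bx\mapsto((\adj\nabla\bvphi)_{ij})_i$ is divergence free by the Piola identity $\operatorname{Div}(\adj\nabla\bvphi)=0$ (valid in $\mathcal D'(\Omega)$ for $\bvphi\in W^{1,p}$, $p\ge 2$), so integrating by parts the weight $x_k$ contributes only $-\int_\Omega(\adj\nabla\bvphi)_{kj}\,u\,d\bx$. Changing variables back by (\ref{covf-0}) and using $(\adj\nabla\bvphi)_{kj}=\det\nabla\bvphi\,(H_{kj}\circ\bvphi)$ turns this into $-\int_{\bvphi(\Omega)}H_{kj}\,\phi\,d\by$, which is exactly the weak form of $\partial_{y_j}\psi_k=H_{kj}$. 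Together with Stage 2 this gives $\bvpsi\in W^{1,p/2}(\bvphi(\Omega)\setminus Z)$ and the formula (\ref{weak-grad}).

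\textbf{Main obstacle.} I expect the crux to be Stage 3: correctly invoking the null-Lagrangian (Piola) identity for the merely Sobolev map $\bvphi$ to perform the integration by parts, and then handling the topology of the image $\bvphi(\Omega)$ — in particular arranging test functions supported in its interior and away from the null image $\bvphi(\partial\Omega)$, and upgrading the resulting local identity to the global $W^{1,p/2}$ statement in the sense of the extension definition of $W^{1,s}(B)$ used in this section (cf.\ \cite{Fonseca-Gangbo} for the delicate lower-regularity versions).
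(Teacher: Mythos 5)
Your route is genuinely different from the paper's. The paper's proof is short because it delegates all of the local analysis to Theorem 3.1 of \cite{Fonseca-Gangbo}: around a.e.\ point $\bx_0$ it obtains an honest local inverse $\bvpsi_0\in W^{1,p/2}(B_\delta(\bvphi(\bx_0)))$ defined on a full open ball, together with the gradient formula, and then only needs to glue these local inverses using the a.e.\ single-valuedness $N(\bvphi,\by)=1$ from Lemma \ref{lem-1}. You instead build $\bvpsi$ globally in Stage 1, write down the candidate gradient, and verify it via the change of variables (\ref{covf-0}) and the distributional Piola identity. Stages 1 and 2 are correct (the inclusion $Z\subseteq\bvphi(N)$ combined with the Lusin (N) property from (\ref{MM-1}), and the bound $\int_{\bvphi(\Omega)}|H|^{p/2}\le\delta_0^{1-p/2}\int_\Omega|\adj\nabla\bvphi|^{p/2}$, are exactly right), and the Stage 3 computation is sound: for $p>3$ the Piola identity holds in $\mathcal D'(\Omega)$, $u=\phi\circ\bvphi$ is an admissible compactly supported $W^{1,p}$ test function, and the index bookkeeping checks out. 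What your approach buys is an explicit, self-contained identification of $\nabla\bvpsi$; what the paper's buys is that the hard topological facts come prepackaged.

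The point you flag but do not close is, however, not a formality, and it is precisely where the citation of \cite{Fonseca-Gangbo} does real work. Your Stage 3 yields the weak-derivative identity only on the open set $V=\operatorname{int}\bvphi(\Omega)\setminus\bvphi(\partial\Omega)$; to conclude $\bvpsi\in W^{1,p/2}(\bvphi(\Omega)\setminus Z)$ in the sense used in this section (extension to an open set containing $\bvphi(\Omega)\setminus Z$) you still need $|\bvphi(\Omega)\setminus\operatorname{int}\bvphi(\Omega)|=0$. For a map that is only a.e.\ injective this follows neither from (\ref{MM-1}) nor from a.e.\ differentiability with $\det\nabla\bvphi\ge\delta_0$; it requires a topological ingredient such as degree theory: for a.e.\ $\by_0=\bvphi(\bx_0)$ one has $\by_0\notin\bvphi(\partial B)$ for a small ball $B\ni\bx_0$ (because $\bx_0$ is the unique preimage), the identity $\int \deg(\bvphi,B,\by)g(\by)\,d\by=\int_B g(\bvphi)\det\nabla\bvphi\,d\bx$ together with $\det\nabla\bvphi\ge\delta_0$ forces $\deg(\bvphi,B,\cdot)>0$ on a neighborhood of $\by_0$, and hence that neighborhood lies in $\bvphi(B)$. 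Supplying this argument, or simply invoking the local invertibility theorem of \cite{Fonseca-Gangbo} whose local inverses live on full open balls and make the issue disappear, is needed to finish your proof.
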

\begin{proof} Let $N(\bvphi,\by)=1$ for $\by\in \bvphi(\Omega)\setminus Z_1$, where $|Z_1|=0.$  By \cite[Theorem 3.1]{Fonseca-Gangbo}, there exists  a set $N_1\subset\Omega$,  with $|N_1|=0$,  such that for each $\bx_0\in \Omega\setminus N_1$,  there is an open neighborhood $D_{\bx_0}\subset\subset \Omega$, for which there exist  $\delta>0 $ and a function $\bvpsi_0\in W^{1,p/2}(B_\delta(\by_0), D_{\bx_0})$,   with $\by_0=\bvphi(\bx_0)$, satisfying the  following properties:
\begin{eqnarray*}
&\bvpsi_0\circ \bvphi (\bx)=\bx \,\,\, a.e.\,\,\bx\in D_{\bx_0},\quad \bvphi\circ \bvpsi_0(\by)=\by \,\,\,  a.e.\,\by\in B_\delta(\by_0),&\\
&\nabla_{\by} \bvpsi_0 (\by)=((\nabla_{\bx}\bvphi)^{-1} \circ \bvpsi_0)(\by)\quad a.e.\,\,\by\in B_\delta(\by_0).&
\end{eqnarray*}
Let $N=N_1\cup \bvphi^{-1}(Z_1)\subset \Omega$ and $Z=Z_1\cup \bvphi(N_1).$ By Lemma \ref{lem-1},  $|Z|=|N|=0, $ and $\bvphi\colon\Omega\setminus N\to \bvphi(\Omega)\setminus Z$ is bijective. Let  $\bvpsi=\bvphi^{-1}\colon \bvphi(\Omega)\setminus Z\to \Omega\setminus N$ denote the inverse map. Let $\by_0\in \bvphi(\Omega)\setminus Z$ and $\bx_0=\bvpsi(\by_0)\in \Omega\setminus N; $ so $\bx_0\in \Omega\setminus N_1$ and $\by_0=\bvphi(\bx_0)$. Let $\bvpsi_0\in W^{1,p/2}(B_\delta(\by_0),D_{\bx_0})$ be the function determined above.   It follows that $\bvpsi=\bvpsi_0$ a.e.\,$B_\delta(\by_0).$ Hence $\bvpsi$ is weakly differentiable on $\bvphi(\Omega)\setminus Z$ and the weak gradient $\nabla\bvpsi(\by)$ is given by (\ref{weak-grad}). This also proves that $\nabla\bvpsi\in L^{p/2}(\bvphi(\Omega)\setminus Z)$, since $\adj\nabla\bvphi(\bx)\in L^{p/2}(\Omega)$ and $\det\nabla\bvphi(\bx))\ge\delta_0>0.$ Finally, since clearly $\bvpsi \in L^\infty(\bvphi(\Omega)\setminus Z),$  it follows that $\bvpsi\in W^{1,p/2}(\bvphi(\Omega)\setminus Z).$ This completes the proof. 
\end{proof}

In order to define the admissible set $\mathcal A$ of the variational problem, we first need to introduce  the functional $\mathcal S$. In particular, it will help us identify types of boundary conditions of the displacement field that are compatible with the injectivity conditions previously discussed.

Let $S\colon W^{1,p}(\Omega,\mathbb{R}^3)\to\mathbb{R}^+$ be a given functional such that $S(t\bvphi)=|t|S(\bvphi)$ for all $t\in\mathbb{R}$ and $\bvphi$. Assume that $S$ is continuous under the weak convergence of $W^{1,p}(\Omega,\mathbb{R}^3)$, and that if $\bvphi$ is constant and $S(\bvphi)=0$ then $\bvphi=0.$ Then, by the Sobolev-Rellich-Kondrachov compact embedding of $W^{1,p}(\Omega)\to L^p(\Omega)$, one easily has the following Poincar\'e-type inequality: there exists a constant $C$ such that
\begin{equation}\label{poincare-1}
\|\bvphi\|_{W^{1,p}(\Omega)}\le C (S(\bvphi) +\|\nabla\bvphi\|_{L^p(\Omega)})\quad\forall\,\bvphi\in W^{1,p}(\Omega,\mathbb{R}^3).
\end{equation}
For such a functional $S$ and given a constant $\beta>0$,  let
\begin{equation}
 \mathcal D_{S,\beta}(\Omega)=\{\bvphi\in W^{1,p}(\Omega,
\mathbb{R}^3): \, S(\bvphi)\le \beta\}. \label{D-Sbeta}
\end{equation}
In many applications, the functional $S$ can be chosen as one of the following:
\begin{eqnarray}
& S_1(\bvphi)=\|\bvphi|_{\partial\Omega}\|_{L^p(\partial\Omega)},  \mbox{ (Dirichlet boundary);}&\\
&S_2(\bvphi)=|\int_D \bvphi d\bx|, \,\, \mbox{where $D\subseteq \Omega$ with $|D|>0$,  \,\, (partial average);}&\\
&S_3(\bvphi)=|\bvphi(\bx_0)|, \,\, \mbox{where $\bx_0\in\bar\Omega$ is given,  \,\, (one-point),}&\label{s-3}
\end{eqnarray}
the last choice following from the compact embedding $W^{1,p}(\Omega)\to C(\bar\Omega)$ as $p>3.$

Assume $\mathcal B(\Omega)$ is any nonempty subset of $\mathcal D_{S,\beta}(\Omega)$ that is closed under the weak convergence of $W^{1,p}(\Omega,\mathbb{R}^3).$ We then introduce  an admissible class for energy $\mathcal E$ by
\begin{equation}
 \mathcal A =\{(\bvphi,Q): \mbox{$\bvphi\in  \mathcal B(\Omega)$ satisfies (\ref{incompressible}), (\ref{1-1})},\,
\, Q\in W^{1,r}(\bvphi(\Omega),\mathcal Q)\},  \label{Admissible-general}
\end{equation}
 where $\mathcal Q$ is defined by expression (\ref{set-Q}).

 \medskip

\noindent {\bf  Remarks. \,} 1. Examples of sets  $\mathcal B(\Omega)$ include the following ones, associated with standard types of  boundary conditions: 
\begin{eqnarray*}
&\mathcal B_1(\Omega)=\{\bvphi\in W^{1,p}(\Omega,
\mathbb{R}^3): \, \bvphi|_{\partial\Omega}=\bvphi_0\}\quad \mbox{($\bvphi_0$ a given trace-function),}&\\
&\mathcal B_2(\Omega)=\{\bvphi\in W^{1,p}(\Omega,
\mathbb{R}^3): \, \int_D \bvphi d\bx=0\} \quad  \mbox{($D\subseteq \Omega$ with $|D|>0$)},&\\
&\mathcal B_3(\Omega)=\{\bvphi\in W^{1,p}(\Omega,
\mathbb{R}^3): \, \bvphi|_A=\bvphi_0|_A\}\quad  \mbox{($A\subseteq \bar\Omega$ nonempty, $\bvphi_0$ bounded).}&
\end{eqnarray*}

2. The set $\mathcal B_3(\Omega)$   includes  $\mathcal B_1(\Omega)$ and  the case that   partial Dirichlet boundary conditions are prescribed.  $\mathcal B_3$ can be considered  a subset of $\mathcal D_{S,\beta}$ with $S=S_3$ defined by (\ref{s-3}).  This is mainly due to  the assumption $p>3$ and the compact embedding $W^{1,p}(\Omega)\to C(\bar\Omega)$.

3.  For $\mathcal B_1(\Omega)$ and  if $\bvphi_0|_{\partial\Omega}\in W^{1,p}(\partial\Omega)$  is injective, then $\bvphi\colon\Omega\to \bvphi_0(\Omega)$ is bijective \cite{ball81}. In this case, the current domain $\bvphi(\Omega)$ is fixed. The minimization problem considered below becomes less technical, with no need of  changing the Landau-de Gennes  energy integral  to the reference domain $\Omega.$

4. Deformation maps corresponding to $\mathcal B_2$ include anti-plane shear deformation. 
\medskip

Note that, for all $(\bvphi,Q)\in\mathcal A$, 
\[
-1/3<\lamin(Q(\by))\le {2}/{3}\quad a.e. \,\, \by\in \bvphi(\Omega).
\]
 Hence,
\begin{equation}\label{infinity-norm-Q}
\det (Q(\by)+\frac13 I) >0 \,\,\, a.e.\,\by\in \bvphi(\Omega),\quad \|Q\|_{L^\infty(\bvphi(\Omega))}\le \frac{2}{\sqrt3}.  
\end{equation}

We now  consider the {\it pull-back}  order,  step-length and effective deformation tensors:
\begin{equation}
\tilde Q(\bx)=Q(\bvphi(\bx)), \,\, \tilde L(\bx)= a_0(\tilde Q(\bx) +\frac13 I),\,\, G(\bx)=\tilde L(\bx)^{-\frac12} \nabla\bvphi(\bx). \label{Q-inv}
\end{equation}
We first explore the relationship between $Q(\by)$ and $\tilde Q(\bx)$. 

\begin{lemma}\label{lem-2}  Let $p>3$. Suppose that $r$ and $q$ are given by   (\ref{cond-r}) and  in (\ref{q}), respectively.  Then,   for all  $(\bvphi, Q)\in \mathcal A$,  $\tilde Q\in W^{1,q}(\Omega,\mathcal Q).$
Moreover,
\begin{equation}\label{weak-der}
\nabla_{\bx} \tilde Q(\bx)=\nabla_{\by} Q(\bvphi(\bx))\nabla\bvphi(\bx)\quad a.e.\,\,\bx\in\Omega.
\end{equation}
\end{lemma}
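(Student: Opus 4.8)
The plan is to establish the chain rule (\ref{weak-der}) by approximating $Q$ with smooth maps and transporting all convergences from the deformed configuration $\bvphi(\Omega)$ back to the reference domain $\Omega$ via the change of variables formula (\ref{covf-0}), whose validity is guaranteed by Lemma \ref{lem-1}. Before anything else I would record that $\tilde Q=Q\circ\bvphi$ is well defined a.e.\ and independent of the representative chosen for $Q$: if two representatives of $Q$ differ on a null set $E\subset\bvphi(\Omega)$, then $\tilde Q$ differs only on $\bvphi^{-1}(E)$, and $|\bvphi^{-1}(E)|\le|E|/\delta_0=0$ by Lemma \ref{lem-1}. Since $Q(\by)\in\mathcal Q$ for a.e.\,$\by$, we obtain $\tilde Q(\bx)\in\mathcal Q$ a.e., whence $\tilde Q\in L^\infty(\Omega)$ with $\|\tilde Q\|_{L^\infty}\le 2/\sqrt3$ by (\ref{infinity-norm-Q}); in particular $\tilde Q\in L^q(\Omega)$.

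Next I would check that the proposed right-hand side of (\ref{weak-der}) lies in $L^q(\Omega)$. Applying (\ref{covf-0}) to the nonnegative function $g=|\nabla_\by Q|^r$ and using $\det\nabla\bvphi\ge\delta_0$ gives
\[
\int_\Omega |\nabla_\by Q(\bvphi(\bx))|^r\,d\bx \le \frac{1}{\delta_0}\int_{\bvphi(\Omega)}|\nabla_\by Q(\by)|^r\,d\by<\infty,
\]
so $(\nabla_\by Q)\circ\bvphi\in L^r(\Omega)$. Since $\nabla\bvphi\in L^p(\Omega)$ and $1/q=1/p+1/r$ by (\ref{q}), Hölder's inequality shows $((\nabla_\by Q)\circ\bvphi)\,\nabla\bvphi\in L^q(\Omega)$. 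It remains to identify this product as the weak gradient of $\tilde Q$.

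For the chain rule itself I would argue by approximation. Using the definition of $W^{1,r}(\bvphi(\Omega))$, extend $Q$ to $\bar Q\in W^{1,r}(O)$ on an open set $O\supseteq\bvphi(\Omega)$ and mollify to obtain $Q_k\in C^\infty$ with $Q_k\to\bar Q$ and $\nabla_\by Q_k\to\nabla_\by\bar Q$ in $L^r$ near $\bvphi(\Omega)$. Each $Q_k$ is $C^1$, so the classical chain rule for a $C^1$ map composed with $\bvphi\in W^{1,p}$ (as in Marcus--Mizel) yields $Q_k\circ\bvphi\in W^{1,p}(\Omega)$ with $\nabla_\bx(Q_k\circ\bvphi)=((\nabla_\by Q_k)\circ\bvphi)\,\nabla\bvphi$ a.e. Applying (\ref{covf-0}) exactly as above to $|Q_k-\bar Q|^r$ and to $|\nabla_\by Q_k-\nabla_\by\bar Q|^r$ gives
\[
Q_k\circ\bvphi\to\tilde Q\quad\text{and}\quad (\nabla_\by Q_k)\circ\bvphi\to(\nabla_\by Q)\circ\bvphi\quad\text{in }L^r(\Omega),
\]
and then, by Hölder, $((\nabla_\by Q_k)\circ\bvphi)\,\nabla\bvphi\to((\nabla_\by Q)\circ\bvphi)\,\nabla\bvphi$ in $L^q(\Omega)$. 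Passing to the limit as $k\to\infty$ in the identity $\int_\Omega (Q_k\circ\bvphi)\,\partial_{x_j}\zeta\,d\bx=-\int_\Omega \big((\nabla_\by Q_k)\circ\bvphi\big)(\partial_{x_j}\bvphi)\,\zeta\,d\bx$, valid for every $\zeta\in C_c^\infty(\Omega)$ and every $j$, establishes (\ref{weak-der}) and hence $\tilde Q\in W^{1,q}(\Omega,\mathcal Q)$.

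The main obstacle is the chain rule for the genuinely Sobolev (not Lipschitz) map $Q$ composed with the non-Lipschitz map $\bvphi$: naively $\nabla_\by Q$ is only defined off a null set, and one must know that its $\bvphi$-preimage is null. This is precisely what the estimate $|\bvphi^{-1}(B)|\le|B|/\delta_0$ of Lemma \ref{lem-1} provides, and it is the uniform lower bound $\det\nabla\bvphi\ge\delta_0$ together with the change of variables formula (\ref{covf-0}) that converts $L^r$-convergence on $\bvphi(\Omega)$ into $L^r$-convergence on $\Omega$, making the approximation argument close.
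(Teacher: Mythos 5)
Your argument is correct in outline and reaches the same conclusion, but it runs on a different approximation engine than the paper's. The paper invokes the Lusin-type approximation of Sobolev functions (Lemma 10 of \cite{Hajlasz}): there are globally Lipschitz maps $Q_j$ on $\mathbb{R}^3$ agreeing with $Q$ on $\bvphi(\Omega)\setminus Z_j$ with $|Z_j|\to 0$; since a Lipschitz function composed with a $W^{1,p}$ map obeys the chain rule, $\tilde Q$ is identified with $Q_j\circ\bvphi$ on $\Omega_j=\bvphi^{-1}(\bvphi(\Omega)\setminus Z_j)$, and the estimate $|\Omega\setminus\Omega_j|\le |Z_j|/\delta_0\to 0$ from Lemma \ref{lem-1} exhausts $\Omega$ up to a null set. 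You instead approximate in norm by smooth maps and pass to the limit in the integration-by-parts identity, transporting the $L^r$ convergence back to $\Omega$ via (\ref{covf-0}) and $\det\nabla\bvphi\ge\delta_0$ --- the same two facts the paper leans on, deployed at a different point; your observation that $\tilde Q$ is independent of the representative of $Q$, and your verification that the candidate gradient lies in $L^q$, coincide with the paper's estimate (\ref{coer-1}) and the H\"older step. The one soft spot is the mollification: the extension $\bar Q$ lives only on some open $O\supseteq\bvphi(\Omega)$, and $\overline{\bvphi(\Omega)}$ need not be contained in $O$, so plain mollification converges only in $W^{1,r}_{\mathrm{loc}}(O)$, and the resulting $Q_k$, while smooth on $O$, need not be Lipschitz on a neighborhood of the essential range of $\bvphi$ --- which is what the ``classical chain rule'' for a $C^1$ outer function composed with a $W^{1,p}$ map actually requires. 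This is patchable (exhaust $\bvphi(\Omega)$ by the sets $\{\by:\dist(\by,\partial O)\ge\delta\}$ and argue as in the paper's exhaustion, or simply replace mollification by the Lipschitz truncation the paper uses), but as written it is the one step that does not quite close on its own.
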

\begin{proof}
Clearly $\tilde Q\in L^\infty(\Omega).$ In general, the composition $\tilde Q=Q\circ \bvphi$ may not be weakly differentiable. However, we show that this is the case if $(\bvphi,Q)\in\mathcal A.$ By the approximation theorem \cite[Lemma 10]{Hajlasz}, there exist measurable sets  $Z_j\subseteq \bvphi(\Omega)$ and Lipschitz functions $Q_j$ in $\mathbb{R}^3$,  such that  $|Z_j|\to 0$ and $Q=Q_j$ in $\bvphi(\Omega)\setminus Z_j$. Let $\Omega_j=\bvphi^{-1}(\bvphi(\Omega)\setminus Z_j).$ Then $\tilde Q(\bx)=Q_j(\bvphi(\bx))$ for $\bx\in \Omega_j.$ Since $Q_j$ is Lipschitz, and  so $Q_j(\bvphi)$ is weakly differentiable on $\Omega$, it follows that $\tilde Q$  is weakly differentiable on $\Omega_j$, with weak gradient given by
\[
\nabla_{\bx} \tilde Q(\bx)=\nabla_{\by} Q_j(\bvphi(\bx))\nabla\bvphi(\bx)=\nabla_{\by} Q(\bvphi(\bx))\nabla\bvphi(\bx)\quad a.e.\,\,\bx\in\Omega_j.
\]
Note that $\Omega\setminus \Omega_j=\bvphi^{-1}(Z_j).$ 
Hence, $|\Omega\setminus\Omega_j|=|\bvphi^{-1}(Z_j)|\le \frac{1}{\delta_0}|Z_j|\to 0$. This  proves the weak differentiability of $\tilde Q$ on $\Omega$ and establishes the equation (\ref{weak-der}). Moreover, by relations (\ref{incompressible}) and (\ref{covf-0}),
\begin{equation}\label{coer-1}
\begin{split} \int_\Omega |\nabla_{\by}Q(\bvphi(\bx))|^rd\bx &\le \frac{1}{\delta_0} \int_\Omega |\nabla_{\by}Q(\bvphi(\bx))|^r\det\nabla\bvphi(\bx) d\bx\\
&=\frac{1}{\delta_0}\int_{\bvphi(\Omega)} |\nabla_{\by}Q(\by)|^r d\by.
\end{split}
\end{equation}
So, $\nabla_{\by}Q(\bvphi(\bx))\in L^r(\Omega).$ Since $\nabla\bvphi\in L^p(\Omega)$,  it follows  that 
\[
\mbox{$\nabla_{\bx}\tilde Q(\bx)=\nabla_{\by}Q(\bvphi(\bx))\nabla\bvphi(\bx)\in L^q(\Omega)$, where $\frac{1}{r}+\frac{1}{p}=\frac{1}{q}$ and $ q>1.$}
\]
Hence $\tilde Q\in W^{1,q}(\Omega,\mathcal Q)$. Furthermore,  it follows that 
\begin{equation}\label{coer-2}
\|\tilde Q\|_{W^{1,q}(\Omega)}\le C(\|\nabla_{\by}Q\|_{L^r(\bvphi(\Omega))} \|\nabla\bvphi\|_{L^p(\Omega)} +1),\end{equation}
where $C$ is a constant independent of $(\bvphi,Q)\in\mathcal A.$
\end{proof}
Let us now rewrite the total energy (\ref{total-energy-Lag_Eul}) for fields $(\bvphi, Q)$ in the admissible set. 

It follows from  (\ref{weak-der}) that,  for  $(\bvphi,Q)\in \mathcal A$, $\nabla_{\by} Q(\bvphi(\bx))=\nabla_{\bx} \tilde Q(\bx) \nabla\bvphi(\bx)^{-1}$. Therefore,  by  (\ref{covf-0}), we have  that, 
\begin{equation}
\mathcal E(\bvphi,Q)= \int_\Omega  \hat W(G)\,d\bx +  \int_{\Omega}\left (\mathcal L({\nabla}_{\bx} \tilde Q \nabla\bvphi^{-1}, \tilde Q) + f(\tilde Q)\right )\det\nabla \bvphi \,d\bx,  \label{total-energy-Lag_Eul-1}
\end{equation} for all $(\bvphi,Q)\in \mathcal A$.

Let us now discuss appropriate boundary conditions to impose on $Q$, either in the form of strong anchoring (Dirichelt) or by modifying the total energy by adding a surface energy penalty of the  Rapini-Papoular form \cite{Rapini-Papoular}.  For this, we first assume that $\bvphi$ satisfies Dirichlet boundary conditions on $\Gamma\subseteq\partial\Omega$ (Remarks 1 and 3, page 10),  and require one of the following  on $Q$:

\begin{enumerate}
\item 
Dirichlet boundary conditions:
\begin{eqnarray}
&&Q(\by)=Q_0(\bx), \quad \bx\in\Gamma\subseteq\partial\Omega \quad \textrm {with}, \label{Dirichlet-Q}\\
&& \by=\bvphi_0(\bx), \quad \bx\in\Gamma. \label{Dirichlet-phi}
\end{eqnarray}
\item  We include a surface energy contribution in the total energy. 
This term, of the Rapini-Papoular form is
\begin{equation}
\mathcal E_{\textrm{\tiny{S}}}(\bvphi, Q)=\int_{\Gamma}h(\bx, \tilde Q(\bx))\,dS, \label{surface-energy-model}
\end{equation}
where $\tilde Q|_{\Gamma}$ is the $L^q-$trace of $\tilde Q\in W^{1,q}(\Omega, \mathcal Q)$, $\alpha\geq 0$ and $h\geq 0$ denotes  a continuous function satisfying 
\begin{equation}
|h(\bx, A) -h(\bx, B)|\leq \alpha|A-B|^q,
\end{equation}
with $A, B\in{\mathbb M}^{3\times 3}$ and $\tr A=0=\tr B$.  This energy form includes  relevant expressions of the liquid crystal theory, such as
\begin{eqnarray}
&&h(\bx, Q)=  \tr(Q-Q_0)^2, \label{surface-energy}
\end{eqnarray}
with $Q_0\in\mathcal Q$ prescribed. 
\end{enumerate}
In the case that $\bvphi$ does not satisfy Dirichlet boundary conditions on any part of $\partial\Omega$,  we still allow for the modification of the energy as in  (\ref{surface-energy-model}), with $\Gamma\subseteq\partial\Omega$. We point out that the new energy integral is taken on the boundary of the reference domain and it involves the pull-back tensor $\tilde Q$. Additional regularity  is required to pose that energy integral in the current domain; this issue will not be addressed in the current work. 

The next two results will be employed in the proof of existence of energy minimizer. The first one establishes properties of weak limits of sequences of  effective deformation tensors and their relation with 
those of the corresponding deformation gradient tensors. The second lemma refers to the preservation of relation (\ref{1-1}) under weak convergence. 

\begin{lemma}\label{weak-product} Let $L_k\in L^\infty(\Omega,\mathbb{S}^3_+)$ with $\|L_k\|_\infty\le C.$ Let $G_k=L_k^{-1/2} F_k.$ Suppose $G_k\wconv \bar G$, $F_k\wconv \bar F$ in $L^p(\Omega,\mathbb{M}^3)$, where $p>3.$ Then, via a subsequence, $\adj G_k\wconv \bar H$, $\adj F_k\wconv \bar K$ in $L^{p/2}(\Omega,\mathbb{M}^3)$ and $\det G_k\wconv \bar g$, $\det F_k\wconv \bar f$ in $L^{p/3}(\Omega).$  Moreover, if $L_k(\bx)\to \bar L(\bx)$ for a.e.\,$\bx\in \Omega$, then 
\[
\bar L^{1/2}\bar G=\bar F,\quad \bar K= \bar H\adj(\bar L)^{1/2}, \quad (\det \bar L^{1/2}) \,\bar g=\bar f \quad a.e.\,\,\Omega.
\]
 \end{lemma}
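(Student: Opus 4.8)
The strategy is to obtain the weak limits $\bar H,\bar K,\bar g,\bar f$ by compactness, and then to identify them using the multiplicativity of $\adj$ and $\det$ under the factorization $F_k=L_k^{1/2}G_k$ together with the pointwise convergence of the factor $L_k^{1/2}$. For the compactness step, note that weak $L^p$ convergence makes $\{G_k\}$ and $\{F_k\}$ bounded in $L^p(\Omega,\mathbb{M}^3)$; since the entries of $\adj$ are quadratic and those of $\det$ are cubic in the matrix entries, $\|\adj G_k\|_{L^{p/2}}\le c\|G_k\|_{L^p}^2$ and $\|\det G_k\|_{L^{p/3}}\le c\|G_k\|_{L^p}^3$, and likewise for $F_k$. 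Because $p>3$ yields $p/2>1$ and $p/3>1$, the spaces $L^{p/2}(\Omega,\mathbb{M}^3)$ and $L^{p/3}(\Omega)$ are reflexive, so a single common subsequence produces $\adj G_k\wconv\bar H$, $\adj F_k\wconv\bar K$, $\det G_k\wconv\bar g$ and $\det F_k\wconv\bar f$.

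The identification rests on an elementary product lemma: if $u_k\wconv u$ in $L^s(\Omega)$ with $s>1$ and $a_k\to a$ a.e.\ with $\sup_k\|a_k\|_\infty<\infty$, then $a_ku_k\wconv au$ in $L^s$; indeed, for $\phi\in L^{s'}$ dominated convergence gives $a_k\phi\to a\phi$ strongly in $L^{s'}$, and pairing this with the weak convergence of $u_k$ gives $\int u_k(a_k\phi)\to\int u(a\phi)$. Granting $L_k(\bx)\to\bar L(\bx)$ a.e., continuity of the matrix square root, adjugate and determinant on the (closed) cone of positive semidefinite matrices, together with the bound $\|L_k\|_\infty\le C$, furnishes a.e.\ convergence with uniform $L^\infty$ bounds for $L_k^{1/2}\to\bar L^{1/2}$, $\adj(L_k^{1/2})\to\adj(\bar L^{1/2})$ and $\det(L_k^{1/2})\to\det(\bar L^{1/2})$.

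I would then combine these through the a.e.\ identities
\[
F_k=L_k^{1/2}G_k,\qquad \adj F_k=\adj(G_k)\,\adj(L_k^{1/2}),\qquad \det F_k=\det(L_k^{1/2})\,\det(G_k),
\]
which follow from $\adj(AB)=\adj(B)\adj(A)$ and multiplicativity of $\det$. Applying the product lemma componentwise to the right-hand sides (with $s=p,\ p/2,\ p/3$) gives $L_k^{1/2}G_k\wconv\bar L^{1/2}\bar G$, $\adj(G_k)\adj(L_k^{1/2})\wconv\bar H\,\adj(\bar L^{1/2})$ and $\det(L_k^{1/2})\det(G_k)\wconv\det(\bar L^{1/2})\,\bar g$. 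Since the left-hand sides converge weakly to $\bar F$, $\bar K$ and $\bar f$, uniqueness of weak limits yields $\bar F=\bar L^{1/2}\bar G$, $\bar K=\bar H\,\adj(\bar L^{1/2})$ and $\bar f=\det(\bar L^{1/2})\,\bar g$; here $\adj(\bar L^{1/2})=\adj(\bar L)^{1/2}$, matching the statement. The one delicate point is precisely this passage to the limit in products: weak convergence of $G_k$ (and of $\adj G_k,\det G_k$) is by itself not preserved under multiplication, and it is essential that the multipliers $L_k^{1/2}$, $\adj(L_k^{1/2})$, $\det(L_k^{1/2})$ converge a.e.\ and remain uniformly bounded, which is exactly what the hypotheses $L_k\to\bar L$ a.e.\ and $\|L_k\|_\infty\le C$ deliver.
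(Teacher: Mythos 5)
Your proposal is correct and follows essentially the same route as the paper: boundedness of $\adj$ and $\det$ in $L^{p/2}$ and $L^{p/3}$ plus reflexivity for the subsequence extraction, then identification of the limits by pairing the a.e.\ convergent, uniformly bounded multipliers built from $L_k^{1/2}$ (the paper phrases this as strong $L^q$ convergence of $g(L_k)$ via bounded convergence, you phrase it as a strong-times-weak product lemma) with the weak limits of $G_k$, $\adj G_k$, $\det G_k$, and finally the identities $\adj(AB)=\adj(B)\adj(A)$ and $\det(AB)=\det A\det B$. The only difference is presentational: you spell out the product lemma and the equality $\adj(\bar L^{1/2})=\adj(\bar L)^{1/2}$ a bit more explicitly than the paper does.
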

 \begin{proof} Clearly, if $\{G_k\}$ is bounded in $L^p(\Omega)$, then $\{\det G_k\}$ and $\{\adj G_k\}$ are bounded in $L^{p/3}(\Omega)$ and $L^{p/2}(\Omega)$, respectively. Hence the weak convergence of a further subsequence of both sequences follows as $p>3.$ By the bounded convergence theorem, our assumption implies that  $g(L_k)\to g(\bar L)$ strongly in $L^q(\Omega)$, for all $q\ge 1$ and for all  continuous functions  $g$. Therefore, $F_k=L_k^{1/2} G_k\wconv \bar L^{1/2}\bar G$ in $L^1(\Omega), $ and  $\bar L^{1/2}\bar G=\bar F.$ The  two remaining statements follow from the elementary matrix identities: $\det (AB)=\det A \det B$ and $\adj(AB)=\adj(B)\adj (A).$
 \end{proof}
 
\begin{lemma}\label{lem-3} Let $\bvphi_k,\bvphi\colon\bar\Omega\to \mathbb{R}^3$ be continuous. Suppose $|\bvphi(\partial\Omega)|=0$  and $\bvphi_k\to \bvphi$ uniformly on $\Omega$ as $k\to\infty.$ Then
\begin{equation}\label{1-1-0}
\limsup_{k\to\infty} |\bvphi_k(\Omega)|\le |\bvphi(\Omega)|.
\end{equation}
Furthermore,  relation (\ref{1-1}) remains invariant under  weak convergence in $W^{1,p}(\Omega,\mathbb{R}^3)$,  for $p>3.$
\end{lemma}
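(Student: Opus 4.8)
The plan is to establish the two assertions in order, using the upper semicontinuity \eqref{1-1-0} as the engine for the invariance of \eqref{1-1}. Note that the first assertion is purely topological/measure-theoretic (only continuity is assumed), while the second brings in the $W^{1,p}$ structure through $p>3$.

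For \eqref{1-1-0} I would exploit compactness of $\bvphi(\bar\Omega)$ together with the hypothesis $|\bvphi(\partial\Omega)|=0$. Since $\bar\Omega=\Omega\cup\partial\Omega$, one has $\bvphi(\bar\Omega)\setminus\bvphi(\Omega)\subseteq\bvphi(\partial\Omega)$, so $|\bvphi(\bar\Omega)|=|\bvphi(\Omega)|$; moreover $\bvphi(\bar\Omega)$ is compact, hence measurable with finite measure. Fix $\ep>0$. By outer regularity of Lebesgue measure choose a bounded open set $U\supseteq\bvphi(\bar\Omega)$ with $|U|<|\bvphi(\bar\Omega)|+\ep$. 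As $\bvphi(\bar\Omega)$ is compact and $\mathbb{R}^3\setminus U$ is closed and disjoint from it, $d:=\dist(\bvphi(\bar\Omega),\mathbb{R}^3\setminus U)>0$. By uniform convergence, for all large $k$ we have $\|\bvphi_k-\bvphi\|_{L^\infty(\Omega)}<d$, and then each point $\bvphi_k(\bx)$ lies within distance $d$ of $\bvphi(\bx)\in\bvphi(\bar\Omega)$, forcing $\bvphi_k(\bar\Omega)\subseteq U$. Hence $|\bvphi_k(\Omega)|\le|U|<|\bvphi(\Omega)|+\ep$ (read as Lebesgue outer measure, so no measurability of $\bvphi_k(\Omega)$ is needed). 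Letting $k\to\infty$ and then $\ep\to0$ gives \eqref{1-1-0}.

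For the invariance of \eqref{1-1}, suppose $\bvphi_k\wconv\bvphi$ in $W^{1,p}(\Omega,\mathbb{R}^3)$ with each $\bvphi_k$ satisfying \eqref{1-1}. Since $p>3$, the embedding $W^{1,p}(\Omega)\hookrightarrow C(\bar\Omega)$ is compact, so the bounded sequence $\{\bvphi_k\}$ has a subsequence converging in $C(\bar\Omega)$; its limit must be $\bvphi$, and by uniqueness of the weak limit the whole sequence converges uniformly, $\bvphi_k\to\bvphi$ in $C(\bar\Omega)$. Because $\bvphi\in W^{1,p}(\Omega,\mathbb{R}^3)$ with $p>3$ it satisfies the Lusin (N) property, and since $\partial\Omega$ is a smooth surface with $|\partial\Omega|=0$ we get $|\bvphi(\partial\Omega)|=0$; thus the hypothesis of the first part is met. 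Next, for $p>3$ the Jacobian is weakly continuous, $\det\nabla\bvphi_k\wconv\det\nabla\bvphi$ in $L^{p/3}(\Omega)$, so testing against the constant $1\in L^{p/(p-3)}(\Omega)$ yields $\int_\Omega\det\nabla\bvphi_k\,d\bx\to\int_\Omega\det\nabla\bvphi\,d\bx$. Combining with \eqref{1-1-0} gives
\[
\int_\Omega\det\nabla\bvphi\,d\bx=\lim_{k\to\infty}\int_\Omega\det\nabla\bvphi_k\,d\bx\le\limsup_{k\to\infty}|\bvphi_k(\Omega)|\le|\bvphi(\Omega)|,
\]
which is exactly \eqref{1-1} for $\bvphi$.

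The main obstacle is the first assertion: the set map $\bvphi\mapsto\bvphi(\Omega)$ is only \emph{upper} semicontinuous for the image measure, and only because the boundary contribution is controlled by $|\bvphi(\partial\Omega)|=0$ (without it the image measure can jump up in the limit). The delicate point is the passage from the outer approximation $U\supseteq\bvphi(\bar\Omega)$ to the inclusion $\bvphi_k(\bar\Omega)\subseteq U$, which relies on the strictly positive distance $d$ between the compact set $\bvphi(\bar\Omega)$ and $\mathbb{R}^3\setminus U$ and on uniform (not merely weak or pointwise) convergence. Everything else---the compact embedding furnishing uniform convergence and the weak continuity of the determinant---is standard for $p>3$.
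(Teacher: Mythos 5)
Your proof is correct and follows essentially the same route as the paper's: uniform convergence traps $\bvphi_k(\Omega)$ inside a slightly enlarged set containing $\bvphi(\Omega)$ whose measure is controlled because $|\bvphi(\partial\Omega)|=0$, and the second assertion combines this with the compact embedding $W^{1,p}(\Omega)\hookrightarrow C(\bar\Omega)$ and the weak continuity of the Jacobian. The only cosmetic difference is that you produce the enlarged set by outer regularity applied to the compact image $\bvphi(\bar\Omega)$, whereas the paper uses the $\epsilon$-neighborhoods $S_\epsilon$ of $S=\bvphi(\Omega)$ together with $\lim_{\epsilon\to 0^+}|S_\epsilon|=|\bar S|=|S|$; your explicit verification that $|\bvphi(\partial\Omega)|=0$ via the Lusin (N) property is a detail the paper leaves implicit.
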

\begin{proof} Without loss of generality, let us  assume that $\lim_{k\to\infty} |\bvphi_k(\Omega)|$ exists. So, for each $\epsilon>0$, there exists a positive integer $N$ such that
\[
|\bvphi_k(\bx)-\bvphi(\bx)|<\epsilon, \quad \forall\,k\ge N,\quad \forall\,\bx\in \bar\Omega.
\]
Let us denote $S=\bvphi(\Omega).$ The previous  inequality implies that $\bvphi_k(\Omega)\subseteq S_\epsilon$, for all $k\ge N,$ where $S_\epsilon=\{\by\in\mathbb{R}^3\colon \dist(\by, S)<\epsilon\}.$ Hence $|\bvphi_k(\Omega)|\le |S_\epsilon|$, for all $k\ge N.$ So
\begin{equation}\label{eq-11}
\lim_{k\to \infty} |\bvphi_k(\Omega)|\le |S_\epsilon|.
\end{equation}
Since $S_\epsilon\subseteq S_\delta$ for all $0<\epsilon<\delta$, and $\cap_{\epsilon>0}S_\epsilon=\bar S$, we have
\[
\lim_{\epsilon\to ^+} |S_\epsilon|=|\cap_{\epsilon>0} S_\epsilon|=|\bar S|=|S|+|\partial S\setminus S|.
\]
Since $S=\bvphi(\Omega)$, we easily verify that $\partial S\setminus S\subseteq \bvphi(\partial\Omega)$ and hence  $|\partial S\setminus S|=0.$ This proves that  $\lim_{\epsilon\to 0^+} |S_\epsilon|=|S|$, and thus (\ref{1-1-0}) follows  from (\ref{eq-11}). To show that condition (\ref{1-1}) holds under  weak convergence in $W^{1,p}(\Omega,\mathbb{R}^3)$, let us assume that $\bvphi_k\wconv  \bvphi\in W^{1,p}(\Omega,\mathbb{R}^3)$ and  also that
\[
\int_\Omega \det\nabla\bvphi_k\,d\bx \le |\bvphi_k(\Omega)|\quad\forall\, k=1,2,\cdots.
\]
A classical weak continuity theorem on determinants \cite{ball77} asserts that $\det\nabla\bvphi_k\wconv \det\nabla\bvphi$ in $L^{p/3}(\Omega)$ as $p>3.$ Also, by the compact embedding of $W^{1,p}(\Omega)\to C(\bar\Omega)$, it follows that $\bvphi_k\to \bvphi$ uniformly on $\Omega.$  Taking  limits as $k\to\infty$ in the above inequality and applying (\ref{1-1-0}),  we obtain  relation (\ref{1-1}) for $\bvphi.$ This concludes the proof of the  lemma.
\end{proof}

\section{Energy minimization}
The following theorem establishes the  existence of minimizer of the energy $\mathcal E$ in the  admissible class $\mathcal A$ defined above. 
\begin{theorem} \label{thm:dgl-incomp}
 Let the admissible set $\mathcal A$ be defined as in (\ref{Admissible-general}). Suppose there exists a pair $(\bvphi,Q)\in \mathcal A$ such that $\mathcal E(\bvphi,Q)<\infty.$
 Then, there exists at least one pair $({\bvphi}^*, Q^*)\in \mathcal A$  such that
\begin{equation}
\mathcal E(\bvphi^*, Q^*)=\inf_{(\bvphi, Q)\in \mathcal A}
\mathcal E(\bvphi, Q).
\end{equation}
\end{theorem}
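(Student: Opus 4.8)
The plan is to run the direct method of the calculus of variations in the reference-domain formulation \eqref{total-energy-Lag_Eul-1}, so that all integrals live on the fixed domain $\Omega$ and the effective unknowns are the pair $(\bvphi,\tilde Q)$ with $\tilde Q=Q\circ\bvphi$. Let $(\bvphi_k,Q_k)\subset\mathcal A$ be a minimizing sequence, so $\mathcal E(\bvphi_k,Q_k)\to\inf_{\mathcal A}\mathcal E=:m<\infty$. First I extract uniform bounds. The coerciveness \eqref{coer-W} gives $\int_\Omega|G_k|^p\,d\bx\le C$; since $Q_k\in\mathcal Q$ the tensors $\tilde L_k$ are uniformly bounded, whence $F_k=\tilde L_k^{1/2}G_k$ is bounded in $L^p(\Omega)$, and the Poincar\'e-type inequality \eqref{poincare-1} together with $\bvphi_k\in\mathcal B(\Omega)$ bounds $\bvphi_k$ in $W^{1,p}(\Omega,\mathbb R^3)$. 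The growth \eqref{supergrowth} bounds $\int_{\bvphi_k(\Omega)}|\nabla_{\by}Q_k|^r\,d\by$, which through estimate \eqref{coer-2} bounds $\tilde Q_k$ in $W^{1,q}(\Omega,\mathcal Q)$. Passing to a subsequence I obtain $\bvphi_k\wconv\bvphi^*$ in $W^{1,p}$ (hence uniformly on $\bar\Omega$, with $\nabla\bvphi_k\wconv\nabla\bvphi^*$), $\tilde Q_k\wconv\tilde Q^*$ in $W^{1,q}$ (hence strongly in $L^q$ and a.e.\ on $\Omega$), $G_k\wconv\bar G$ in $L^p$, and, by the weak continuity of minors \cite{ball77}, $\adj\nabla\bvphi_k\wconv\adj\nabla\bvphi^*$ and $\det\nabla\bvphi_k\wconv\det\nabla\bvphi^*$.

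The central step, and the one where the blow-up hypothesis \eqref{Phi3} is indispensable, is to recover the limiting deformation gradient from $\bar G$, i.e.\ to show $\bar G=(\tilde L^*)^{-1/2}\nabla\bvphi^*$ with $\tilde L^*$ nonsingular. Since $\tilde Q_k\to\tilde Q^*$ a.e., the constitutive relation \eqref{eq210} gives $\tilde L_k\to\tilde L^*=a_0(\tilde Q^*+\tfrac13 I)$ a.e. To keep $\tilde L^*$ invertible I argue by contradiction: were the set $\{\bx:\lamin(\tilde Q^*(\bx))=-\tfrac13\}$ of positive measure, then $f(\tilde Q_k)\to+\infty$ a.e.\ there by \eqref{Phi3}, and since $\det\nabla\bvphi_k\ge\delta_0$, Fatou's lemma would force $\int_\Omega f(\tilde Q_k)\det\nabla\bvphi_k\,d\bx\to\infty$, contradicting the energy bound. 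Hence $\lamin(\tilde Q^*)>-\tfrac13$ a.e., so $\tilde Q^*\in\mathcal Q$ and $\tilde L^*$ is invertible a.e. Lemma~\ref{weak-product}, applied with $L_k=\tilde L_k$, then identifies $\bar G=(\tilde L^*)^{-1/2}\nabla\bvphi^*=:G^*$ and, simultaneously, $\adj G_k\wconv\adj G^*$ and $\det G_k\wconv\det G^*$.

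Next I verify $(\bvphi^*,Q^*)\in\mathcal A$. Membership $\bvphi^*\in\mathcal B(\Omega)$ follows from weak closedness of $\mathcal B(\Omega)$; the constraint $\det\nabla\bvphi^*\ge\delta_0$ is preserved since it defines a closed convex subset of $L^{p/3}$ and $\det\nabla\bvphi_k\wconv\det\nabla\bvphi^*$; and the Ciarlet--Ne\v cas inequality \eqref{1-1} passes to the limit by Lemma~\ref{lem-3}. Thus $\bvphi^*$ satisfies \eqref{incompressible} and \eqref{1-1}, so by Lemma~\ref{lem-1-1} it is a.e.\ invertible with Sobolev inverse $\bvpsi^*$; I then define $Q^*:=\tilde Q^*\circ\bvpsi^*$ on $\bvphi^*(\Omega)$ and check, using the chain rule \eqref{weak-der} read backwards together with the integrability furnished by \eqref{cond-r} and $1/q=1/p+1/r$, that $Q^*\in W^{1,r}(\bvphi^*(\Omega),\mathcal Q)$, so that $\tilde Q^*=Q^*\circ\bvphi^*$ is genuinely its pull-back.

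Finally I establish weak lower semicontinuity of the three parts of \eqref{total-energy-Lag_Eul-1}. For the elastic term, polyconvexity \eqref{polyconvexity} with the joint weak convergence $(G_k,\adj G_k,\det G_k)\wconv(G^*,\adj G^*,\det G^*)$ from Lemma~\ref{weak-product} yields $\int_\Omega\hat W(G^*)\le\liminf\int_\Omega\hat W(G_k)$ by the classical lower semicontinuity of convex integrands. For the bulk term, $f(\tilde Q_k)\to f(\tilde Q^*)$ a.e.\ with $f\ge0$ while $\det\nabla\bvphi_k\wconv\det\nabla\bvphi^*$; an Ioffe-type lower semicontinuity theorem (the integrand being nonnegative and linear, hence convex, in the Jacobian variable) gives the inequality for $\int_\Omega f(\tilde Q_k)\det\nabla\bvphi_k\,d\bx$. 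The genuinely delicate term is the nematic gradient energy: I would return to the Eulerian form $\int_{\bvphi_k(\Omega)}\mathcal L(\nabla_{\by}Q_k,Q_k)\,d\by$, fix an arbitrary compact $K\subset\bvphi^*(\Omega)$ (contained in $\bvphi_k(\Omega)$ for large $k$ by uniform convergence of $\bvphi_k$), use the $L^r$-bound on $\nabla_{\by}Q_k$ to extract a weak limit on $K$, identify it with $\nabla_{\by}Q^*$ via the pull-back convergence $\tilde Q_k\wconv\tilde Q^*$ in $W^{1,q}(\Omega)$, apply weak lower semicontinuity of the convex $\mathcal L(\cdot,Q)$ on $K$, and exhaust $\bvphi^*(\Omega)$ by such $K$. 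Summing the three inequalities gives $\mathcal E(\bvphi^*,Q^*)\le\liminf\mathcal E(\bvphi_k,Q_k)=m$, and since $(\bvphi^*,Q^*)\in\mathcal A$ the infimum is attained. I expect the main obstacles to be, first, the invertibility recovery of $\tilde L^*$ through \eqref{Phi3} (the conceptual heart of the argument), and, second, the lower semicontinuity of the nematic gradient term across the moving domains $\bvphi_k(\Omega)$, where the exponent conditions \eqref{cond-r} are precisely what render all the appearing products integrable.
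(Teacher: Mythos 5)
Your proposal follows the same overall skeleton as the paper's proof: coercivity via \eqref{coer-W}, \eqref{supergrowth}, \eqref{poincare-1} and \eqref{coer-2}; extraction of weak limits and weak continuity of minors; the Fatou/blow-up argument from \eqref{Phi3} to show $\lamin(\tilde Q^*)>-\tfrac13$ a.e.\ so that $\tilde L^*$ is invertible and Lemma~\ref{weak-product} identifies $G^*=(\tilde L^*)^{-1/2}\nabla\bvphi^*$; stability of \eqref{incompressible} and \eqref{1-1} via Lemma~\ref{lem-3}; construction of $Q^*=\tilde Q^*\circ\bvpsi^*$ via Lemma~\ref{lem-1-1}; and polyconvexity for the elastic term. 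The one place where you genuinely diverge is the nematic gradient term. The paper never leaves the reference domain: it writes $E_3^k=\int_\Omega \mathcal F(\nabla\tilde Q_k\,\adj\nabla\bvphi_k,\det\nabla\bvphi_k,\tilde Q_k)\,d\bx$ with the perspective function $\mathcal F(A,t,Q)=t\,\mathcal L(A/t,Q)$, which is jointly convex in $(A,t)$, and combines this with the compensated-compactness statement $\nabla\tilde Q_k\,\adj\nabla\bvphi_k\wconv\nabla\tilde Q^*\,\adj\nabla\bvphi^*$ in $L^m(\Omega)$ (Lemma~\ref{cc}, proved by a null-Lagrangian/div-curl identity) and the uniform convergence of $\tilde Q_k$. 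This is precisely where the exponent condition \eqref{cond-r} is used, to guarantee $m=\frac{pq}{p+2q}>1$. Your alternative stays in the Eulerian frame and exhausts $\bvphi^*(\Omega)$ by compacts; if completed it buys a more transparent use of the convexity of $\mathcal L$ in $\nabla_{\by}Q$, at the price of confronting the moving domains directly.

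That is also where your argument has a concrete gap. The claim that a compact $K\subset\bvphi^*(\Omega)$ is contained in $\bvphi_k(\Omega)$ for large $k$ does \emph{not} follow from uniform convergence of $\bvphi_k$ alone: uniform convergence only places $\bvphi_k(\Omega)$ inside a neighborhood of $\bvphi^*(\Omega)$, not the reverse inclusion. To get $K\subset\bvphi_k(\Omega)$ you need a topological degree argument (using $\det\nabla\bvphi_k\ge\delta_0$ and continuity, the degree $\deg(\bvphi^*,B,\by)\ge 1$ on small balls is stable under uniform perturbation), which you have not supplied. Similarly, identifying the weak $L^r(K)$-limit of $\nabla_{\by}Q_k$ with $\nabla_{\by}Q^*$ ``via the pull-back convergence'' is not immediate, since $Q_k$ and $Q^*$ live on different domains; one needs the uniform H\"older bounds on $Q_k$ over $\overline{\bvphi_k(\Omega)}$ (which the paper does establish from Morrey's inequality) and an Arzel\`a--Ascoli argument in the Eulerian variable to first obtain locally uniform convergence $Q_k\to Q^*$. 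A smaller imprecision: the chain rule only puts $Q^*$ in $W^{1,m}$ with $m=\frac{pq}{p+2q}<r$; the membership $Q^*\in W^{1,r}(\bvphi^*(\Omega),\mathcal Q)$ required for $(\bvphi^*,Q^*)\in\mathcal A$ is obtained in the paper only \emph{a posteriori}, from the lower semicontinuity inequality and the coercivity \eqref{supergrowth}, not from the chain rule. All of these gaps are repairable, but each requires a nontrivial additional argument that the paper's pull-back formulation \eqref{total-energy-Lag_Eul-1} and Lemma~\ref{cc} are specifically designed to avoid.
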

\begin{proof}
By assumption,  there exists a constant
$K_1>0$ such that
\begin{equation}
0\le \inf_{(\bvphi, Q)\in \mathcal A} \mathcal{E}(\bvphi,Q)  < K_1.
\label{infupperbound}
\end{equation}
Let $(\bvphi_k,  Q_k)\in\mathcal A $  be a minimizing sequence for
$\mathcal E$, that is
\begin{equation}\lim_{k\to\infty} \mathcal E(\boldsymbol \varphi_k,  Q_{k})= \inf_{(\bvphi,  Q)\in\mathcal
A}\,\mathcal E(\bvphi,Q)<K_1.
\end{equation}
Denote $F_k, G_k, L_k, \tilde Q_k,\tilde L_k$ the corresponding quantities defined from $(\bvphi_k,Q_k)$ as above.

\medskip

\noindent {\bf Step 1: \,Coercivity.}
Given any $(\bvphi,Q)\in\mathcal A$, since $0<\lamax(\tilde L(\bx))\le a_0,$ we easily see that
\begin{equation}\label{lemma:G-adjG}
|\nabla\bvphi|\le \sqrt{a_0}\, |\tilde L^{-1/2}\nabla \bvphi|=\sqrt{a_0}\, |G|.
\end{equation}
 By the coercivity assumptions,  
\begin{equation}
\mathcal E(\bvphi, Q)
 \geq \alpha\int_{\Omega} |G|^p \,d\bx+ \int_{\bvphi(\Omega)} \big(f(Q)+\kappa |\nabla_{\by} Q|^{r}\big)\,d\by. \label{total-energy-coercivity}
\end{equation}
By (\ref{lemma:G-adjG}), we have 
\[
\int_{\bvphi(\Omega)} |\nabla_{\by}Q|^r\,d\by + \int_{\Omega} |\nabla\bvphi|^p \,d\bx\le C\, \mathcal E(\bvphi,Q).
\]
Since $\bvphi\in \mathcal D_{S,\beta} (\Omega)$ (as defined in (\ref{D-Sbeta})), applying (\ref{poincare-1}) and (\ref{coer-2}) yields
\begin{equation}\label{coer-3}
\|G\|_{L^p(\Omega)}+\|\bvphi\|_{W^{1,p}(\Omega)} +\|\tilde Q\|_{W^{1,q}(\Omega)}\le C(\mathcal E(\bvphi,Q)),
\end{equation}
where $C(M)$ is a constant depending on $M>0$ that remains bounded for $M$ in a bounded set.

\noindent{\bf Step 2: Convergence and compactness.} Let $(\bvphi_k,Q_k)\in\mathcal A$ be the minimizing sequence above. By (\ref{coer-3}),  we obtain that
\begin{equation}\label{estimates-all}
\{(G_k,  \bvphi_k, \tilde Q_k)\}\mbox{ is bounded in } \, L^p(\Omega)\times W^{1,p}(\Omega)\times W^{1,q}(\Omega).
\end{equation}
Hence,  there exists a  subsequence (labeled also by $k$) such that
\begin{eqnarray}
&G_k\wconv G^* \,\, \textrm {in}\,\, L^{p}(\Omega),\quad \bvphi_k\wconv \bvphi^* \,\, \textrm {in}\,\, W^{1,p}(\Omega),\quad  \tilde Q_k\wconv {\tilde Q}^* \,\, \textrm{in }\,\, W^{1,q}(\Omega),&\\
&g_k\equiv \det G_k \wconv g^* \,\,\mbox{ in $L^{p/3}(\Omega)$,}\quad H_k\equiv \adj(G_k)\wconv H^* \,\, \textrm {in}\,\, L^{{p}/{2}}(\Omega),& \label{weak1}\\
 &\det \nabla\bvphi_k \wconv \det\nabla\bvphi^*\,\, \mbox{ in $L^{p/3}(\Omega)$,} \quad \adj \nabla\bvphi_k\wconv \adj \nabla\bvphi^*\,\, \textrm {in}\,\, L^{{p}/{2}}(\Omega).& \label{weak2}
\end{eqnarray}
Since $p>3$, the convergence statements in (\ref{weak2}) follow from the classical weak continuity theorem of null-Lagrangians  \cite{ball77}.

We first prove  the following result on compensated compactness.
\begin{lemma}\label{cc}
Let $p>3$ and $q\in \mathbf R$ be as in (\ref{q}), and define  $m=\frac{pq}{p+2q}.$ Then $m>1$  and
\begin{equation}\label{weak-limit-equation-0}
\nabla_{\bx}\tilde Q_k \adj\nabla\bvphi_k \wconv \nabla_{\bx}\tilde Q^* \adj\nabla\bvphi^*\,\,\mbox{ in $L^m(\Omega)$.}
\end{equation}
\end{lemma}
\begin{proof} Since $\{\nabla_{\bx}\tilde Q_k\}$ is bounded in $L^q(\Omega)$ and $\{\adj \nabla\bvphi_k \}$ is bounded in $L^{p/2}(\Omega)$, we have that $\{\nabla_{\bx}\tilde Q_k \,\adj \nabla\bvphi_k\}$ is bounded in $L^{m}(\Omega)$, with  $m$  as above. 
 It follows from (]ref{q})  that $m>1.$  Therefore, we only need to show the convergence (\ref{weak-limit-equation-0}) in the distribution sense. Component-wise, such a convergence follows from the classical div-curl lemma, or from the representation  
\begin{equation}
\big((\nabla_{\bx}\tilde Q)(\adj \nabla\bvphi)\big)_{isj}= \sum_{m=1}^3\frac{\partial \tilde Q_{is}}{\partial x_m}(\adj \nabla\bvphi)_{mj}=\det(\nabla \mathbf u), \label{det-grad-adj}
\end{equation}
where, for fixed  $i,j,s=1, 2, 3$,  the vector-field  $\mathbf u:\Omega\to\mathbb{R}^3$  is defined with components $u_1,u_2,u_3$ given by $u_j= \tilde Q_{is}$ and $ u_k=\bvphi\cdot\mathbf e_k, \, \forall \,k\neq j.$
\end{proof}

Since $\|\bvphi_k\|_{W^{1,p}(\Omega)}$ and $\|Q_k\|_{W^{1,r}(\bvphi_k(\Omega))}$ are bounded and $p$, $r$ are both greater than 3, we have by Morrey's inequality,
\[
|\bvphi_k(\bx)-\bvphi_k(\bx')|\le C_1|\bx-\bx'|^{1-\frac{3}{p}},\quad \forall\,\bx, \,\bx'\in \bar\Omega,
\]
\[
|Q_k(\by)-Q_k(\by')|\le C_2|\by-\by'|^{1-\frac{3}{r}},\quad \forall\,\by, \,\by'\in \overline{\bvphi_k(\Omega)}.
\]
Hence
\[
|\tilde Q_k(\bx)-\tilde Q_k(\bx')|\le C_3 |\bx-\bx'|^{\gamma},\quad \forall\,\bx,\, \bx'\in\bar\Omega,
\]
where $\gamma=(1-\frac{3}{p})(1-\frac{3}{r})\in (0,1).$ This shows that both $(\bvphi_k)$ and $(\tilde Q_k)$ are uniformly bounded and 
equi-continuous on $\bar\Omega$; hence, by Arzela-Ascoli theorem,  it follows that, via another subsequence, $$\bvphi_k\to \bvphi_* \quad \textrm { and} \quad \tilde Q_k(\bx)\to \tilde Q_*(\bx),$$ uniformly on $\bar\Omega.$  Moreover, by   uniqueness of weak limit,  $\bvphi_*=\bvphi^*$ and  $\tilde Q_*=\tilde Q^*;$ that is, $\bvphi_k\to\bvphi^*$ and $\tilde Q_k=Q_k(\bvphi_k)\to  \tilde Q^*$ uniformly on $\bar\Omega.$ This imply that $\tilde L_k(\bx)\to a_0(\tilde Q^*(\bx)+\frac 13 I)\equiv \tilde L^*(\bx)$ uniformly on $\Omega.$ Hence,  by Lemma \ref{weak-product}, for a.e.\,\,$\bx\in \Omega$,
\begin{eqnarray}
    &\tilde L^*(\bx)^{1/2} G^*(\bx)=\nabla\bvphi^*(\bx),&\label{weak-limit-equation-1}\\
    & \adj \nabla \bvphi^*(\bx)=H^*(\bx) \adj(\tilde L^*(\bx))^{1/2}, &\label{weak-limit-equation-2}\\
& \det \nabla \bvphi^*(\bx)=g^*(\bx) \det(\tilde L^*(\bx))^{1/2}. &\label{weak-limit-equation-3}
\end{eqnarray}

 \noindent {\bf Step 3: Definition and properties of  $Q^*$.} 
By the weak convergence property $\det\nabla \bvphi_k\wconv \det \nabla\bvphi^*$, Lemma \ref{lem-3}  and the weak closedness of $\mathcal B(\Omega)$, we have that $\bvphi^*\in \mathcal B(\Omega)$ and it satisfies (\ref{incompressible}) and (\ref{1-1}). Hence, by Lemma \ref{lem-1-1}, there exists an inverse map \[
\bvpsi^*=(\bvphi^*)^{-1}\in W^{1,p/2}(\bvphi^*(\Omega)\setminus Z, \Omega\setminus N).
\]
This allows us to define the tensor map
\begin{equation}
Q^*(\by)=\tilde Q^*(\bvpsi^*(\by)), \quad  \by\in\bvphi^*(\Omega)\setminus Z. \label{define-Q*} 
\end{equation}
From  (\ref{MM-1}), it follows that  $|(\bvpsi^*)^{-1}(N_j)|=|\bvphi^*(N_j)|\to 0$ if $N_j\subset \bvphi^*(\Omega)\setminus Z$ and $|N_j|\to 0$. Therefore, we  follow the same  lines of proof as in Lemma \ref{lem-2} to conclude that $Q^*$ is weakly differentiable and 
\begin{equation}\label{chain-rule}
 \nabla_{\by}Q^*(\by)=\nabla_{\bx}\tilde Q^*(\bvpsi^*(\by))(\nabla_{\bx}\bvphi^*(\bvpsi^*(\by)))^{-1} \quad  a.e.\,\,\by\in\bvphi^*(\Omega)\setminus Z.
\end{equation}
This, together with the fact that  $\nabla\tilde Q^*\in L^q$ and $(\nabla\bvphi^*)^{-1}=\frac{\adj\nabla\bvphi^*}{\det\nabla\bvphi^*}\in L^{p/2}$ give
\begin{equation}\label{chain-rule-2}
Q^*\in W^{1,m}(\bvphi^*(\Omega)\setminus Z),\quad 
\end{equation}
with $m$ as in Lemma 4.2.

 We next show that ${\tilde Q}^*(\bx)\in\mathcal Q$ and thus $Q^*(\by)\in\mathcal Q$,  by studying  the properties of the minimizing sequence $\{\tilde Q_k\}$. 
 We may  assume $\tilde Q_k(\bx)\to \tilde Q^*(\bx)$ for a.e.\,$\bx\in\Omega.$ Since $\tilde Q_k(\bx)\in \mathcal Q$, it follows that 
\begin{equation}\label{Q-in-Q}
\tilde Q^*(\bx)\in {\mathcal Q}\quad \forall\,\bx\in\Omega.
\end{equation}
It remains to show  that  $\det (\tilde Q^*(\bx)+\frac13)>0$ a.e.\,on $\Omega$. For this, 
 let  us denote $q_k:=\det (\tilde Q_k+ \frac{1}{3}I).$  Hence,  the sequence  $\{q_k\}$ is bounded and $$q_k(\bx)\to q^*(\bx)=\det(\tilde Q^*(\bx)+\frac13), $$ a.e. \,$\bx\in\Omega.$ 
Now,  we want to prove that $q^*>0$ a.e. $\Omega$. For this, we argue by contradiction and  supposes that $q^*=0$ on a set $A\subset\Omega$, with $|A|>0.$  Note that
 $$ \int_A\det(\tilde Q_k+\frac{1}{3}I)\,d\bx\to  \int_A\det(\tilde Q^*+\frac{1}{3}I)\,d\bx=0.$$
Consider now the sequence $f_k(\bx):=f(\tilde Q_k(\bx))$.  Since $f_k\geq 0$, it follows from Fatou's theorem that, 
 $$\int_A \liminf_{k\to\infty} f_k(\bx)\,d\bx\leq  \liminf_{k\to\infty}\int_A f_k(\bx)\,d\bx.$$
 By the blow-up assumption (\ref{Phi3}) on $f$, for $\bx\in A$, 
  $$ \liminf_{k\to\infty} f_k(\bx)=  \lim_{\det(Q+\frac{1}{3}I)\to 0} f(Q)=+\infty,$$
 and consequently $ \lim_{k\to\infty}\int_A f(\tilde Q_k(\bx))\,d\bx= +\infty. $ This  contradicts the fact that $$\int_{\Omega} f(\tilde Q_k(\bx))d\bx \le C\mathcal E(\bvphi_k,Q_k)< CK_1.$$
  Hence, $\tilde Q^*(\bx)\in\mathcal Q $ a.e.\,$\bx\in \Omega$.  So $Q^*(\by)\in \mathcal Q$ for a.e.\,$\by\in\bvphi^*(\Omega).$ 

Finally, we define 
  \begin{equation}
  L^*(\by)=a_0 (Q^*(\by)+\frac{1}{3}I). \label{L*}
  \end{equation}
Then $\det L^*(\by)>0$; hence  $L^*(\by)=a_0 (Q^*(\by)+\frac{1}{3}I)$ is invertible a.e.\,$\by\in \bvphi^*(\Omega).$ 

 By (\ref{weak-limit-equation-1}) and (\ref{weak-limit-equation-2}), we have
 \[
    G^*=(\tilde L^*)^{-1/2} \nabla\bvphi^*,\,\,  H^* =\adj G^*,\,\, g^*=\det G^* \quad a.e.\,\,\Omega.
\]
Therefore, $\mathcal E(\bvphi^*,Q^*)=E_1^* +E_2^* +E_3^*,$
where
\begin{equation}
\label{split-energy}
\begin{split} 
E_1^* =\int_\Omega& \Psi(G^*, H^*, g^*)\,d\bx,\quad E_2^*=\int_\Omega f(\tilde Q^*)  \det\nabla\bvphi^* d\bx,\\
&E_3^* = \int_\Omega \mathcal L(\nabla \tilde Q^*(\nabla\bvphi^*)^{-1}, \tilde Q^*)\,\det\nabla\bvphi^* d\bx.
\end{split}
\end{equation}

  \medskip

  \noindent{\bf Step 4:  Weak lower semicontinuity of the energy.}  As in (\ref{split-energy}), we write $$\mathcal E(\bvphi_k,Q_k)=E_1^k +E_2^k +E_3^k,$$
where
\begin{equation}\label{weak-limit-equation-5}
\begin{split} 
E_1^k =\int_\Omega &\Psi(G_k, H_k, g_k)\,d\bx,\quad E_2^k=\int_\Omega f(\tilde Q_k)  \det\nabla\bvphi_k d\bx,\\
&E_3^k = \int_\Omega  \mathcal L(\nabla \tilde Q_k (\nabla\bvphi_k)^{-1}, \tilde Q_k)\,\det\nabla\bvphi_k d\bx.
\end{split}
\end{equation}
The weak lower semicontinuity of  $\mathcal E$ along $(\bvphi_k,Q_k)$ follows from that of each integral  functional of (\ref{weak-limit-equation-5}). Specifically,
\begin{enumerate}
\item The weak lower semicontinuity of  $E_1^k$
follows from the convexity of $\Psi(G,H,g)$ on $(G,H,g)$ and the weak convergence $G_k\wconv G^*$, $H_k\wconv H^*$ and $g_k\wconv g^*$ established above.
\item  The weak lower semicontinuity  of  $E_2^k$ 
 follows from the weak convergence of $\det\nabla\bvphi_k\wconv \det\nabla\bvphi^*$ and the pointwise convergence of $f(\tilde Q_k(\bx))\to f(\tilde Q^*(\bx))$, which is uniform on sets where $\lamin(\tilde Q^*(\bx))\ge -\frac13 +\tau$ for any given $\tau>0.$
\item Let us now prove the  weak lower semicontinuity of $E_3^k$.
 First, we  note that 
\[
\mathcal L(AB^{-1},Q)\det B =\mathcal F(A\adj B, \det B, Q),
\]
where $\mathcal F(A,t,Q)=t\mathcal L(A/t, Q)$. Elementary calculations show that $\mathcal F(A,t,Q)$ is convex in $(A,t)$, for $t>0$,  provided $\mathcal L(A,Q)$ is convex in $A$.  Let us  write
\begin{equation}\label{eq-final}
E_3^k=\int_\Omega \mathcal F(\nabla \tilde Q_k \adj \nabla\bvphi_k,\det\nabla\bvphi_k, \tilde Q_k)\, d\bx.
\end{equation}
The weak lower semicontinuity of $E_3^k$  follows from  the convexity of $\mathcal F(A,t,Q)$ with respect to $(A,t)$, the weak convergences of  $\nabla \tilde Q_k \adj \nabla\bvphi_k\wconv \nabla \tilde Q^* \adj \nabla\bvphi^*$ and $\det\nabla\bvphi_k\wconv \det\nabla\bvphi^*$,  and the uniform convergence of $\tilde Q_k\to\tilde Q^*.$
\end{enumerate}
So, the combination of these three steps establishes the weak lower semicontinuity of $\mathcal E$ on $\mathcal A$.
Hence, 
\[
\mathcal E(\bvphi^*,Q^*)\le \lim_{k\to \infty}\mathcal E(\bvphi_k,Q_k)<K_1<\infty.
\]
This, in turn, shows that
\[
\int_{\bvphi^*(\Omega)} |\nabla_{\by}Q^*(\by)|^r\,d\by <\infty.
\]
Consequently,  $Q^*\in W^{1,r}(\bvphi^*(\Omega),\mathcal Q)$, and  hence $(\bvphi^*,Q^*)\in\mathcal A$ is a minimizer of $\mathcal E$ on $\mathcal A.$ This completes the proof of Theorem \ref{thm:dgl-incomp}.
\end{proof}
In the last part of this section, we will address boundary conditions on $Q$, in cases compatible with the type of regularity assumed so far. 
Straightforward modifications of the above proof, allow us to establish the following results.

\begin{corollary}
 Let $p>3$, and $\sigma\geq 0$.  Suppose that  $\bvphi_0\in W^{1,p}(\partial\Omega, {\mathbb R}^3)$,  injective, and  $Q^0\in W^{1,2}(\partial\Omega, \mathcal Q)$ are prescribed. 
Let  $\mathcal E$ and $\mathcal E_{\textrm{\tiny{S}}}$ be as in (\ref{total-energy-Lag_Eul}) and (\ref{surface-energy}), respectively, and define
\begin{equation}
\mathcal E_{\textrm{\tiny{total}}}= \mathcal E+ \sigma\mathcal E_{\textrm{\tiny{S}}}. \label{E_bulk+E_surface}
\end{equation}
Suppose that $\mathcal A$ is as in (\ref{Admissible-general}) with $r\geq 2$ and  $\mathcal B=\mathcal B_1$. 
Then,  there exists a minimizer of the energy  (\ref{E_bulk+E_surface}), in each of the following cases:
\begin{enumerate}
\item $\sigma=0$ and $Q$ satisfies the boundary conditions (\ref{Dirichlet-Q}). 
\item $\sigma\neq 0$ and $\Gamma\subseteq\partial\Omega$.
\end{enumerate}
\end{corollary}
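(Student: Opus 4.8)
The plan is to run the minimization scheme of Theorem \ref{thm:dgl-incomp} essentially verbatim, exploiting the fact that the choice $\mathcal B=\mathcal B_1$ with injective $\bvphi_0$ drastically simplifies the geometry. Indeed, by Remark 3 (page 10) and \cite{ball81}, every admissible $\bvphi$ is a bijection of $\Omega$ onto the \emph{fixed} domain $\Omega^*:=\bvphi_0(\Omega)$, so (\ref{1-1}) holds with equality and, crucially, the order tensor $Q$ always lives in the fixed space $W^{1,r}(\Omega^*,\mathcal Q)$. This removes the need to track a varying current configuration and to reconstruct $Q^*$ by the inversion $\tilde Q^*\circ\bvpsi^*$ of Step 3 in Theorem \ref{thm:dgl-incomp}; instead $Q^*$ is simply the weak $W^{1,r}(\Omega^*)$ limit of the minimizing sequence, and $\tilde Q^*:=Q^*\circ\bvphi^*$. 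Assuming, as in Theorem \ref{thm:dgl-incomp}, that the admissible class contains a pair of finite energy, I would first fix a minimizing sequence $(\bvphi_k,Q_k)$, reproduce the coercivity bounds of Step 1, and extract (for a subsequence) $\bvphi_k\wconv\bvphi^*$ in $W^{1,p}$, $G_k\wconv G^*$ in $L^p$, $Q_k\wconv Q^*$ in $W^{1,r}(\Omega^*)$, together with the $\adj$ and $\det$ weak limits as in (\ref{weak1})--(\ref{weak2}). Weak closedness of $\mathcal B_1$, the null-Lagrangian weak continuity of the determinant \cite{ball77}, and the weak closedness of the convex set $\{g\in L^{p/3}(\Omega):g\ge\delta_0\}$ give $\bvphi^*\in\mathcal B_1$ with $\det\nabla\bvphi^*\ge\delta_0$, so by \cite{ball81} $\bvphi^*$ is again a bijection onto $\Omega^*$.

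The one genuinely new point is that the relaxed hypothesis $r\ge 2$ no longer yields H\"older continuity of $Q_k$, so the uniform convergence $\tilde Q_k\to\tilde Q^*$ used in Theorem \ref{thm:dgl-incomp} to identify the a.e.\ limit of $\tilde L_k$ is unavailable. I would recover the needed a.e.\ convergence of $\tilde Q_k=Q_k\circ\bvphi_k$ by a measure-theoretic argument that uses the fixed domain in place of the higher exponent. By the compact embedding $W^{1,r}(\Omega^*)\hookrightarrow\hookrightarrow L^s(\Omega^*)$ one has $Q_k\to Q^*$ strongly in $L^s(\Omega^*)$ and a.e.; splitting
\[
Q_k\circ\bvphi_k-Q^*\circ\bvphi^*=(Q_k-Q^*)\circ\bvphi_k+(Q^*\circ\bvphi_k-Q^*\circ\bvphi^*),
\]
the first term is controlled by the change of variables (\ref{covf-0}) and $\det\nabla\bvphi_k\ge\delta_0$, which give $\|(Q_k-Q^*)\circ\bvphi_k\|_{L^s(\Omega)}\le\delta_0^{-1/s}\|Q_k-Q^*\|_{L^s(\Omega^*)}\to0$, exactly as in (\ref{coer-1}); the second is handled by approximating $Q^*$ in $L^s(\Omega^*)$ by continuous tensors, using the uniform convergence $\bvphi_k\to\bvphi^*$ for the continuous approximant and the same pull-back bound for the remainder, followed by a diagonal argument. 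Hence $\tilde Q_k\to\tilde Q^*$ in $L^s(\Omega)$ and, since $\{\nabla\tilde Q_k\}$ is bounded in $L^q$ by (\ref{coer-2}), $\tilde Q_k\wconv\tilde Q^*$ in $W^{1,q}(\Omega)$. In particular $\tilde L_k\to\tilde L^*=a_0(\tilde Q^*+\tfrac13 I)$ a.e., so Lemma \ref{weak-product} yields (\ref{weak-limit-equation-1})--(\ref{weak-limit-equation-3}) with $G^*=(\tilde L^*)^{-1/2}\nabla\bvphi^*$, $H^*=\adj G^*$, $g^*=\det G^*$. The admissibility $Q^*\in\mathcal Q$ a.e.\ (i.e.\ $\lamin(Q^*)>-\tfrac13$) follows from the Fatou/blow-up contradiction argument of Step 3, unchanged.

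With these limits in hand, weak lower semicontinuity of $\mathcal E$ is established term by term exactly as in Step 4: polyconvexity of $\Psi$ for $E_1^k$, Fatou plus a.e.\ convergence of $f(\tilde Q_k)$ and $\det\nabla\bvphi_k\wconv\det\nabla\bvphi^*$ for $E_2^k$, and convexity of $\mathcal F(A,t,Q)=t\mathcal L(A/t,Q)$ together with the div-curl convergence $\nabla\tilde Q_k\adj\nabla\bvphi_k\wconv\nabla\tilde Q^*\adj\nabla\bvphi^*$ of Lemma \ref{cc} for $E_3^k$. It remains to treat the boundary terms. In case (1) the Dirichlet datum (\ref{Dirichlet-Q}) is equivalent, in pull-back form, to the affine constraint $\tilde Q|_\Gamma=Q^0$ on $\tilde Q\in W^{1,q}(\Omega)$ (using $\bvphi=\bvphi_0$ on $\Gamma$); since the trace operator $W^{1,q}(\Omega)\to L^q(\partial\Omega)$ is bounded and linear, hence weakly continuous, and $\tilde Q_k\wconv\tilde Q^*$ in $W^{1,q}(\Omega)$, the constraint set is weakly closed and $\tilde Q^*|_\Gamma=Q^0$, so $(\bvphi^*,Q^*)$ is admissible and minimizing. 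In case (2) the surface term is in fact weakly \emph{continuous}: the compact trace embedding $W^{1,q}(\Omega)\hookrightarrow\hookrightarrow L^q(\partial\Omega)$ (valid since $q>1$) gives $\tilde Q_k|_\Gamma\to\tilde Q^*|_\Gamma$ strongly in $L^q(\Gamma)$, and the growth bound $|h(\bx,A)-h(\bx,B)|\le\alpha|A-B|^q$ yields $|\mathcal E_{\textrm{\tiny{S}}}(\bvphi_k,Q_k)-\mathcal E_{\textrm{\tiny{S}}}(\bvphi^*,Q^*)|\le\alpha\|\tilde Q_k-\tilde Q^*\|_{L^q(\Gamma)}^q\to0$; thus $\mathcal E_{\textrm{\tiny{total}}}=\mathcal E+\sigma\mathcal E_{\textrm{\tiny{S}}}$ is weakly lower semicontinuous and a minimizer exists. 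I expect the composition convergence of the second paragraph to be the main obstacle: it is precisely there that the fixed-domain structure coming from $\mathcal B_1$, rather than the higher exponent $r>3$, is what compensates for the loss of continuity of $Q$.
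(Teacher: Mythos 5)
Your overall strategy is the right one, and the paper itself offers no details here (it only says ``straightforward modifications of the above proof''), so the comparison is against the route the paper signposts. Your second paragraph --- recovering the a.e.\ convergence of $\tilde Q_k=Q_k\circ\bvphi_k$ by splitting off $(Q_k-Q^*)\circ\bvphi_k$, controlling it with the change of variables (\ref{covf-0}) and $\det\nabla\bvphi_k\ge\delta_0$, and approximating $Q^*$ by continuous tensors for the other piece --- is a genuine and necessary addition: once $r\ge 2$ replaces $r>3$ the Morrey/Arzel\`a--Ascoli step of Theorem \ref{thm:dgl-incomp} is gone, yet Lemma \ref{weak-product} still needs $\tilde L_k\to\tilde L^*$ a.e.\ to identify $G^*=(\tilde L^*)^{-1/2}\nabla\bvphi^*$. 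That part is correct and would be needed on any route. The boundary arguments (weak continuity of the linear trace for case (1), compact trace embedding plus the H\"older-type bound on $h$ for case (2)) are also fine.

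The gap is in your treatment of $E_3^k$. You invoke Lemma \ref{cc} to get $\nabla\tilde Q_k\,\adj\nabla\bvphi_k\wconv\nabla\tilde Q^*\,\adj\nabla\bvphi^*$ in $L^m(\Omega)$, but $m=\frac{pq}{p+2q}>1$ is equivalent to $q>\frac{p}{p-2}$, i.e.\ to $r>\frac{p}{p-3}$ --- exactly the part of hypothesis (\ref{cond-r}) that the corollary drops. For $r=2$ and, say, $p=4$ one gets $m\le 1$, the product is only bounded in $L^1$ (or worse), and no weak subsequential limit, let alone the compensated-compactness identification via (\ref{det-grad-adj}), is available. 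Since the stated point of the corollary is precisely to admit the quadratic gradient energy $r=2$, your pull-back argument for the gradient term fails in the very regime the corollary is meant to cover. The escape is the one flagged in Remark 3 after the definition of $\mathcal B_1$: with $\bvphi_0$ injective the current domain $\Omega^*=\bvphi_0(\Omega)$ is fixed, so there is no need to transform the Landau--de Gennes integral to $\Omega$ at all. Keep $E_3$ in Eulerian form, $\int_{\Omega^*}\mathcal L(\nabla_{\by}Q_k,Q_k)\,d\by$, extract $Q_k\wconv Q^*$ in $W^{1,r}(\Omega^*)$ and $Q_k\to Q^*$ strongly in $L^r(\Omega^*)$ by Rellich, and conclude by the assumed convexity of $\mathcal L$ in $\nabla_{\by}Q$ together with the uniform bound $\|Q_k\|_{L^\infty}\le 2/\sqrt3$; the term $\int_{\Omega^*}f(Q_k)\,d\by$ and the blow-up argument for $\lamin(Q^*)>-\frac13$ likewise go through directly on $\Omega^*$. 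The coupling with $\nabla\bvphi_k$ then survives only in the elastic term through $G_k$, where your argument is already adequate.
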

This corollary establishes existence of minimizer, even in the case that  the Landau-de Gennes energy is quadratic on gradient of $Q$, as for standard nematic liquid crystals.   
\begin{corollary} Suppose that $\Gamma\subset\partial\Omega$, with $\Gamma\neq\partial\Omega$.  Let $p$, $\sigma$,  $Q_0$, $\mathcal E$ and  $\mathcal E_{\textrm{\tiny{S}}}$ be as in Corollary 1. 
 Suppose that  $\bvphi_0\in W^{1,p}(\Gamma, {\mathbb R}^3)$ is  injective and (\ref{Dirichlet-phi}) holds.  Suppose that $\mathcal A$ is as in (\ref{Admissible-general}) with $\mathcal B=\mathcal B_3$. 
 Then,  there exists a minimizer of the energy  (\ref{E_bulk+E_surface}), in each of the following cases:
\begin{enumerate}
\item $\sigma=0$ and $Q$ satisfies the boundary conditions (\ref{Dirichlet-Q}). 
\item $\sigma\neq 0$. 
\end{enumerate}
\end{corollary}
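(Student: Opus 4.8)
The plan is to establish this second corollary by closely mirroring the proof of Theorem \ref{thm:dgl-incomp}, adapting it to accommodate (i) the partial Dirichlet boundary condition $\bvphi|_\Gamma = \bvphi_0|_\Gamma$ with $\Gamma \subsetneq \partial\Omega$, enforced through the admissible class $\mathcal B_3$, and (ii) the Rapini--Papoular surface term $\sigma\mathcal E_{\textrm{\tiny{S}}}$. First I would fix a minimizing sequence $(\bvphi_k, Q_k) \in \mathcal A$ for $\mathcal E_{\textrm{\tiny{total}}}$ and observe that, since $\mathcal E_{\textrm{\tiny{S}}} \ge 0$ and $\sigma \ge 0$, the bound $\mathcal E(\bvphi_k, Q_k) \le \mathcal E_{\textrm{\tiny{total}}}(\bvphi_k, Q_k) \le K_1$ holds. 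The coercivity estimate (\ref{coer-3}) therefore applies verbatim: the crucial point is that the Poincar\'e-type inequality (\ref{poincare-1}) is available for $\mathcal B_3 \subset \mathcal D_{S_3,\beta}$ via the one-point functional $S_3$, which is legitimate precisely because $p > 3$ gives the compact embedding $W^{1,p}(\Omega) \hookrightarrow C(\bar\Omega)$. This yields the same uniform bounds (\ref{estimates-all}) on $\{(G_k, \bvphi_k, \tilde Q_k)\}$.

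Next I would extract, via a subsequence, the weak limits $G_k \wconv G^*$, $\bvphi_k \wconv \bvphi^*$, $\tilde Q_k \wconv \tilde Q^*$ together with the weak convergences of minors in (\ref{weak1})--(\ref{weak2}), exactly as in Step 2 of the theorem. The Arzel\`a--Ascoli argument gives uniform convergence $\bvphi_k \to \bvphi^*$ and $\tilde Q_k \to \tilde Q^*$ on $\bar\Omega$, hence $\tilde L_k \to \tilde L^*$ uniformly, and Lemma \ref{weak-product} delivers the product identities (\ref{weak-limit-equation-1})--(\ref{weak-limit-equation-3}). The weak closedness of $\mathcal B_3$ under $W^{1,p}$-weak convergence secures $\bvphi^* \in \mathcal B_3$; the partial Dirichlet condition $\bvphi^*|_\Gamma = \bvphi_0|_\Gamma$ is preserved because uniform convergence forces pointwise agreement on $\Gamma$. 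The injectivity machinery (Lemma \ref{lem-3} for the invariance of (\ref{1-1}) and Lemma \ref{lem-1-1} for the existence of the inverse $\bvpsi^*$) then lets me define $Q^*(\by) = \tilde Q^*(\bvpsi^*(\by))$ and reproduce the constraint analysis of Step 3, using Fatou's lemma and the blow-up hypothesis (\ref{Phi3}) to force $\tilde Q^*(\bx) \in \mathcal Q$ a.e., so that $L^*$ is invertible.

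The weak lower semicontinuity of the three bulk pieces $E_1^*, E_2^*, E_3^*$ is identical to Step 4, resting on polyconvexity of $\hat W$, the convexity of $\mathcal F(A,t,Q) = t\mathcal L(A/t,Q)$ in $(A,t)$, the compensated-compactness Lemma \ref{cc}, and the uniform convergence of $\tilde Q_k$. The genuinely new ingredient is the lower semicontinuity of the surface term $\sigma\int_\Gamma h(\bx, \tilde Q_k(\bx))\,dS$. Here I would exploit that $\tilde Q_k \to \tilde Q^*$ uniformly on $\bar\Omega$, so the traces converge uniformly on $\Gamma$; combined with the Lipschitz-type estimate $|h(\bx,A) - h(\bx,B)| \le \alpha|A - B|^q$ assumed on $h$, this gives $\int_\Gamma h(\bx,\tilde Q_k)\,dS \to \int_\Gamma h(\bx,\tilde Q^*)\,dS$ outright (continuity, not merely semicontinuity), which is more than enough. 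Assembling the inequalities yields $\mathcal E_{\textrm{\tiny{total}}}(\bvphi^*,Q^*) \le \liminf_k \mathcal E_{\textrm{\tiny{total}}}(\bvphi_k, Q_k) = \inf_{\mathcal A} \mathcal E_{\textrm{\tiny{total}}}$, and the finiteness of the limiting energy upgrades $Q^* \in W^{1,m}$ to $Q^* \in W^{1,r}(\bvphi^*(\Omega), \mathcal Q)$, so $(\bvphi^*, Q^*) \in \mathcal A$ is the desired minimizer; case (1) with $\sigma = 0$ is the sub-case where the surface term is absent and the Dirichlet condition (\ref{Dirichlet-Q}) on $Q$ is instead carried along the sequence.

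The main obstacle I anticipate is not the abstract semicontinuity but the technical verification that the trace operator behaves well along the partial-boundary/surface-energy setup: specifically, one must check that the $L^q$-trace of $\tilde Q^* \in W^{1,q}(\Omega)$ on $\Gamma$ coincides with the pointwise (uniform) limit of the traces of $\tilde Q_k$, and that $\Gamma \subsetneq \partial\Omega$ with only partial Dirichlet data on $\bvphi$ still permits the injectivity conclusions of Lemma \ref{lem-1-1}. The former is handled cleanly by the $W^{1,p} \hookrightarrow C(\bar\Omega)$ embedding (so all traces are honest continuous restrictions), and the latter requires noting that Lemma \ref{lem-1-1} depends only on (\ref{incompressible}) and (\ref{1-1}), both of which are preserved for $\bvphi^* \in \mathcal B_3$ independently of how much of $\partial\Omega$ carries Dirichlet data; thus no extra argument beyond the theorem's is truly needed.
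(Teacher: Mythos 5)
Your proposal is correct and follows essentially the same route as the paper, which offers no separate argument for this corollary beyond the remark that it follows by ``straightforward modifications'' of the proof of Theorem \ref{thm:dgl-incomp}; your identification of the two genuinely new points (coercivity via the one-point functional $S_3$ for $\mathcal B_3$, and convergence of the surface term from the uniform convergence of $\tilde Q_k$ together with the H\"older-type bound on $h$) is exactly the intended adaptation.
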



Next, we give an example of a family of plane deformations in the framework of Corollary 2 and that  are relevant to   experimental settings of liquid crystal elastomers \cite{KundlerFinkelmann1995},  \cite{zubarev1999monodomain}, \cite{Finkelmann-Sanchez2008} and  \cite{Finkelmann-Sanchez2009}.  For prescribed constants $a>0, b>0, c>0$, and denoting 
$\mathcal D=\{(x_1, x_2):   -a<x_1<a, \,\, -b<x_2<b \},  $
 we define  the  domain and boundary, respectively,
\begin{eqnarray*}\Omega=\{\bx: (x_1, x_2)\in\mathcal D, \,\, -c< x_3<c\},\\ 
\Gamma_{\pm}=\{\bx: -a<x_1<a, \,  x_2=\pm b,\,  -c<x_3<c \}.
\end{eqnarray*}
Suppose that  $Q_0\in W^{1,2}(\Gamma, \mathcal Q)$,  $\lambda\geq 1$  and $\delta>0$ are also prescribed. Letting $\bx=(x_1, x_2, x_3)\in \Omega$, we consider the family of plane  deformations, 
\begin{eqnarray}
&& y_1= \varphi_1(x_1, x_2), \nonumber\\
&& y_2= \varphi_2(x_1, x_2), \nonumber\\
&& y_3= x_3, \label{plane-deformations}
\end{eqnarray}
subject to the constraint \,
$\det[\varphi_1, \varphi_2]\geq \delta, $
and boundary conditions,
\begin{eqnarray*}
&&\varphi_2(x_1, \pm b)= \pm \lambda b, \\
&&Q(x_1, \pm b)=Q_0.
\end{eqnarray*}
The following observations apply:
\begin{enumerate}
\item The deformations in (\ref{plane-deformations}) contain extensions, uniaxial and biaxial,  and shear. A modification of the third equation to the form $y_3=\alpha(x_1, x_2)+ \beta(x_1, x_2)x_3$, with appropriate choices of the continuous coefficients $\alpha$ and $\beta$, also allows to include extension or compression along $x_3$. 
\item The two-dimensional deformation map $(\varphi_1, \varphi_2)$ belongs to the family of Lipschitz continuous functions in $\mathcal D$. This implies the absence of cavities in the deformed configuration, and it also guarantees injectivity of the map.  
\item  A class of critical points of the energy of the form (\ref{plane-deformations}) are the stripped domains occurring as $\lambda>1$ reaches a critical value. These are domains parallel to the $x_1$-direction, with alternating positive and negative values of the shear rate.  An analysis of the length scales of these patterns is carried out in \cite{Calderer-domain-patterns2013}.

\end{enumerate}

\section{Conclusion}{\footnote{At the time of completion of this article, the authors became aware of a recent preprint studying a model of liquid crystal elastomer with director field gradients \cite{BarchesiDeSimone2013}. The model studied  here and the methods of proof show  fundamental differences with the aforementioned work, resulting in two independent articles, with different scope and research points of view.}}

In this article, we study existence of minimizers of a Landau-de Gennes liquid crystal elastomer. Special features of the work include the modeling of the liquid crystal behavior with the nematic order tensor $Q$, defined in the current configuration, rather than the customary uniaxial director field $\bn$. Also, the Landau-de Gennes energy is taken in the Eulerian frame of the liquid crystal. The passing between the Lagrangian frame of the elastic energy and the Eulerian one of the liquid crystal carries analytical difficulties due to the need to ensure invertibility of the deformation map. This is achieved for an appropriate  class of class of Dirichlet boundary conditions.  However, the analysis does not cover the case that an anchoring energy is prescribed on the deformed boundary of the domain.  This requires a higher regularity of the order tensor to ensure that boundary integrals of  the trace of  $Q^2$ are well defined. This is the subject of current analysis. 

\section*{Acknowledgments}  In this research, Calderer and Garavito  were partially supported by  grants from the National Science Foundation  NSF-DMS 1211896. Calderer also expresses her gratitude to the Newton Institute for Mathematical Sciences, University of Cambridge, UK,  where part of this research was carried out. Yan is also grateful for the support of Tang Ao-Qing visiting professorship from the College of Mathematics, Jilin University, China, where part of this research was carried out.



\medskip
\medskip

\end{document}